\theoremstyle{plain}
\newtheorem {theorem}{Theorem}[section]
\newtheorem {lemma}[theorem]{Lemma}
\theoremstyle{definition}
\newtheorem{definition}[theorem]{Definition}
\newtheorem{rmk}[theorem]{Remark}
\theoremstyle{remark}
\numberwithin{equation}{section}
\newcommand{\N}{\mathbb{N}}
\newcommand{\R}{\mathbb{R}}
\renewcommand{\P}{P_\alpha}
\newcommand{\cL}{\mathcal{L}}
\newcommand{\X} {\mathcal{X}\! f}
\newcommand{\e}{\varepsilon}
\newcommand{\s}{\sigma}
\renewcommand{\d}{\partial}
\newcommand{\de}{\delta}
\renewcommand{\a}{\alpha}
\newcommand{\y}{{x_{n+1}}}
\newcommand{\x}{{\bar x}}
\renewcommand{\phi}{\varphi}
\newcommand{\De}{\Delta}
\renewcommand{\S}{\Sigma}
\newcommand{\<}{\langle}
\renewcommand{\>}{\rangle}
\newcommand{\res}
{\mathop{\hbox{\vrule height 7pt width .5pt depth 0pt \vrule
height .5pt width 6pt depth 0pt}}\nolimits}
\keywords{}
\subjclass[2020]{Main: 35B05. Secondary: 35H20, 49Q05.
}
\begin{document}
\author[V.~Franceschi]{Valentina Franceschi$^*$}

\author[R.~Monti]{Roberto Monti$^*$}

\author[A.~Socionovo]{Alessandro Socionovo$^\dagger$}
 
\address{$^*$Dipartimento di Matematica Tullio Levi Civita, Universit\`a di Padova, Italy}
\email{valentina.franceschi@unipd.it}
\email{monti@math.unipd.it}

\address{$^\dagger$Laboratoire Jacques-Louis Lions, CNRS, Inria, Sorbonne Université, Université de Paris, France}
\email{alesocio1994@gmail.com}

\title{Mean value formulas on surfaces in Grushin spaces}

\begin{abstract} We prove (sub)mean value formulas at the point $0\in\Sigma$  for (sub)harmo\-nic functions on a hypersurface $\Sigma\subset\R^{n+1}$ where the differentiable structure and the surface measure depend on the ambient Grushin structure.
\end{abstract}

\maketitle


\renewcommand{\x}{x}
\renewcommand{\y}{y}

\renewcommand{\rho}{\varrho}

\renewcommand{\X}{X^2}

\newcommand{\dil}{d}
\renewcommand{\r}{\varrho}

\section*{Acknowledgments}

\begin{wrapfigure}{r}{0.1\linewidth}
	\vspace{-.25cm}
	\hspace{-.35cm}
	\includegraphics[width=1.2\linewidth]{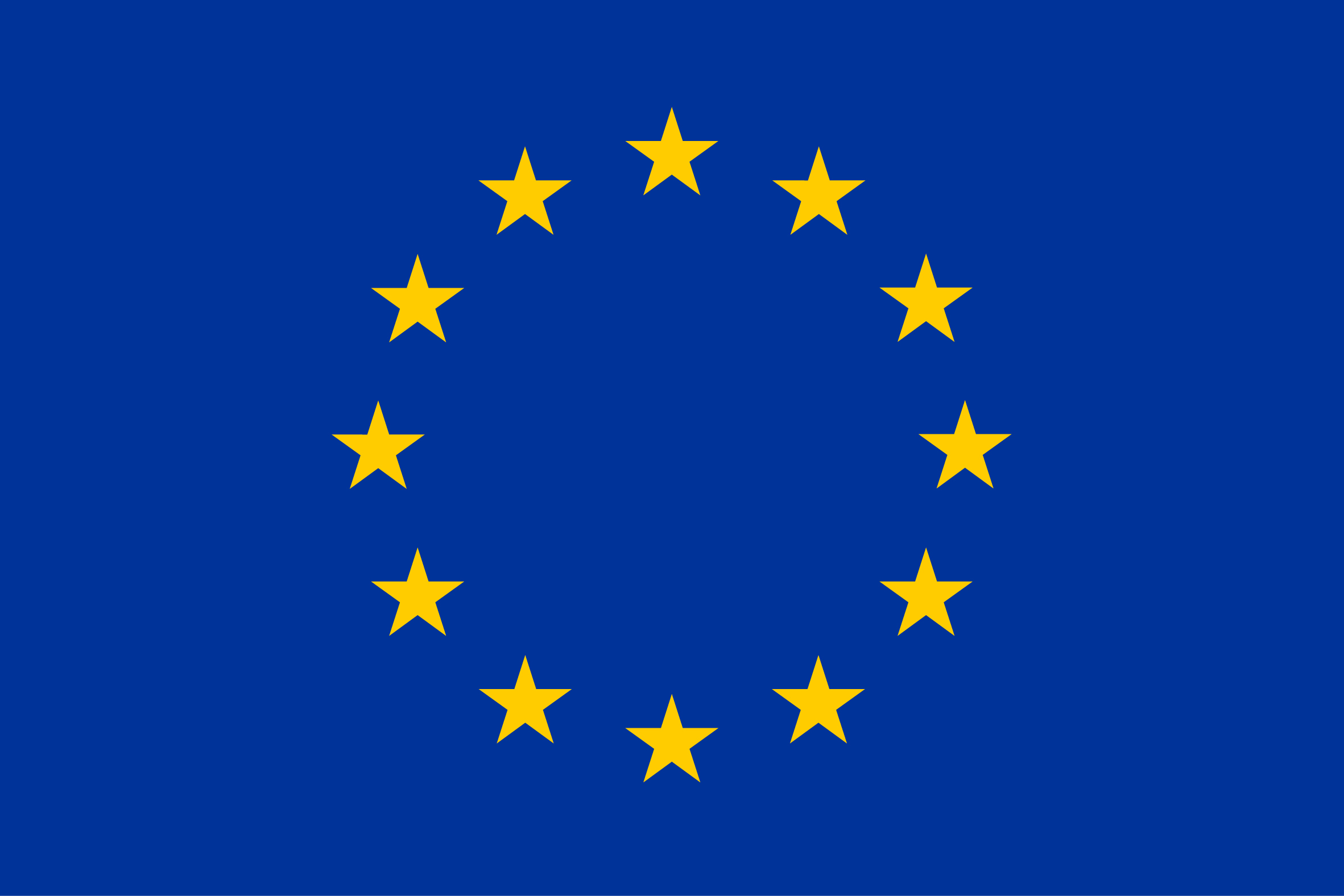}
	\vspace{-.2cm}
\end{wrapfigure}
This project has received funding from the European Union’s Horizon 2020 research and innovation programme under the Marie Skłodowska-Curie grant agreement No 101034255.

\section{Introduction}

For $n\in\mathbb N $ and $\alpha> 0$, we consider the vector fields on $\R^{n+1}$
\begin{equation}\label{eq:vector_fields}
\begin{array}{l}
\displaystyle X_i =\frac{\d}{\d {x_i} },
\quad i=1,\ldots,n,\quad 
\displaystyle X_{n+1} =|\x|^\alpha \frac{\d}{\d \y }.
\end{array}
\end{equation}
Here, a generic point in $\R^{n+1}$ is denoted by $\xi=(\x ,\y)=(x_1,\ldots, x_n, \y) \in \R^{n+1}$. We also consider the second order partial differential operator on $\R^{n+1}$ given by
\begin{equation}
\label{eq:cL}
  \cL \varphi  = \sum_{i=1}^{n+1} X_i^2\varphi   = \Delta_{\x}\varphi  +|\x|^{2\a} \partial^2_{\y}\varphi,
\end{equation}
where $|\x| = (x_1^2+\ldots+x_n^2)^{1/2}$. 
The operator $\cL$ in \eqref{eq:cL}   is known as Baouendi-Grushin operator, see \cite{G19} for a historical account and see also \cite{FL83}.

  When $\a$ is an even integer, this operator is hypoelliptic and 
 admits a   fundamental solution with pole at  any point $\xi_0\in\R^{n+1}$ (see~\cite{BFI15} for an explicit representation). When $\xi_0=0$,
 an explicit formula for this fundamental solution is in fact known for any $\a>0$ (see \cite{G})  and, up to a normalization constant, it is the function $\Gamma (\xi)=\rho(\xi)^{1-n-\alpha} $, $\xi\neq0$,  where  $\rho:\R^{n+1}\to\R$ is the gauge function 
\begin{equation}
  \label{def:Gauge}
  \rho(\xi)=\big (|\x|^{2(\a+1)}+(\a+1)^2\y^2\big ) ^{\frac{1}{2(\a+1)}}.
\end{equation}

Let $\Sigma\subset\R^{n+1}$ be a hypersurface of class $C^2$ with $0\in\Sigma$. 
We declare the  vector-fields \eqref{eq:vector_fields}
orthonormal and we project them onto the tangent space to $\Sigma$, getting tangential operators $\delta_1,\ldots,\delta_{n+1}$.
We fix on $\Sigma$ the hypersurface measure $\sigma$ associated with \eqref{eq:vector_fields} according to the theory of sub-elliptic perimeters and then we define the adjoint operators 
$\delta_1^*,\ldots,\delta_{n+1}^*$ with respect to $\sigma$. The natural restriction of $\cL$ to $\Sigma$ is the differential operator
\[
 \cL_\Sigma =-\sum_{i=1}^{n+1}\de_i^*\de_i.
\] 
In this paper, we investigate the validity of mean value formulas (sub-mean value formulas) at the point $0\in \S$ for functions $f\in C^2(\Sigma)$ satisfying
$\cL _\Sigma f=0$ ($\cL_\Sigma f\geq 0$, respectively).

The operator $\cL$ is an example of ``sub-Laplacian" or ``sum of squares of vector fields"
satisfying the H\"ormander condition \cite{H67}.   When $\cL=\sum_{i=1}^m X_i^2$ is such an operator in $\R^{n+1}$, the validity of mean-value formulas for $\cL$-harmonic functions 
is established in~\cite{CGL93,BL13,CL21}. Denoting
by
$\Gamma(\cdot,\xi_0)$ the  fundamental solution for $\cL$ with pole at $\xi_0$,
if a function $f$ satisfies $\cL f=0$ then for any $r>0$ and $\xi_0\in\R^{n+1}$
\begin{equation}
\label{eq:mvf}
  f(\xi_0)=\frac{1}{r}\int_{\Omega_r(\xi_0)}f(\xi) K(\xi,\xi_0) \; d\xi,
\end{equation}
where  $
\Omega_r(\xi_0)= \{\xi\in\R^{n+1}: \Gamma(\xi,\xi_0) >1/r\}$ and 
$
 K(\xi,\xi_0) = {|X\Gamma(\xi,\xi_0) |^2}/{\Gamma(\xi,\xi_0) ^2}$, with $|X\Gamma(\cdot, \xi_0)|^2=\sum_{i=1}^{m}(X_i\Gamma(\cdot,\xi_0))^2$.  
The appearance of the kernel $K$  is due to the different symmetry of Carnot-Carath\'eodory balls   associated with the vector-fields building up $\cL$ and level sets of $\Gamma(\cdot,\xi_0)$.

In the Riemannian case, the validity of   mean-value formulas on metric balls for harmonic functions leads to the notion of ``harmonic manifold".
Starting probably with~\cite{W50}, there exists a huge literature on the problem of characterizing harmonic manifolds and it is not possible to give a full account, here. In fact, our hypersurface $\Sigma$ embedded in $\R^{n+1}$ with the Grushin structure is not a Riemannian manifold but rather a weighted 
Riemannian manifold that becomes singular at the point $0\in\S$, see Remark \ref{REM}.  

In the Grushin space, the harmonicity at $0\in \S$ is governed by the following structural function $q_\S:\Sigma\setminus\{0\}\to \R$
 \begin{equation}
  \label{eq:exactq}
  q_\S(\xi) =\<X\rho,\nu\>\Big[(n+3\a)\<X\log\rho,\nu\> -2\a\<\nabla_\x\log|\x|,\bar\nu\> +nH_\S\Big].
 \end{equation} 
Above, $X\rho =(X_1\rho,\ldots,X_{n+1}\rho)$ is the $X$-gradient of the gauge function $\rho$, $\nu= (\bar \nu,\nu_{n+1})$ is the $\a$-normal to $\Sigma$, $\langle \cdot,\cdot\rangle$ are standard scalar products in $\R^{n+1}$ and $\R^n$, and $H_\S$ is the mean curvature of $\Sigma$ associated with the Grushin structure.
We say that $\Sigma$ is $\a$-harmonic if $q_\S=0$. In particular, any homogeneous hypersurface, $\<X\rho,\nu\>=0$, is $\a$-harmonic, as we show in Section~\ref{ss:homo}.
\begin{theorem}\label{MVF}
 Let $\S\subset\R^{n+1}$ be an $\a$-harmonic  hypersurface of class $C^2$ with $0\in\S$.
Any function $f \in C^2(\S)$ such that $   \cL_\Sigma f=0$
  satisfies the mean-value formula at $0$
 \begin{equation}
  \label{mvf}
  f(0) =\frac{C_{\S,n,\a} }{r^{n+\a} } \int_{B_r\cap \S }f(\xi) \, |\delta\rho(\xi)|^2  d\sigma,
 \end{equation}
 for all $r\in(0,r_0)$ and for some $r_0>0$ depending on $\S$.
The constant
 $0<C_{\S,n,\a}<\infty$ is defined by 
 \begin{equation}
  \label{mvfC}
\frac{1}{C_{\S,n,\a} } =\frac{1}{r^{n+\a} } \int_{B_r\cap \S }    |\delta\rho(\xi)|^2  d\sigma,
 \end{equation}
 where the right hand-side does not depend on $r\in (0,r_0)$.
 \end{theorem}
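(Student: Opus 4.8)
The plan is to reduce the statement to the constancy in $r$ of the normalized average $M(r):=r^{-(n+\a)}\int_{B_r\cap\S}f|\de\rho|^2\,d\s$, where $B_r=\{\rho<r\}$ is the gauge ball. Granting that $M$ is constant on $(0,r_0)$, the theorem follows at once: applying this to the harmonic constant $f\equiv 1$ shows that $r^{-(n+\a)}\int_{B_r\cap\S}|\de\rho|^2\,d\s$ is independent of $r$, which is exactly the well-posedness of $C_{\S,n,\a}$ in \eqref{mvfC}; and for general $f$, continuity of $f$ at $0$ gives $M(r)=\lim_{s\to0^+}M(s)=f(0)\,C_{\S,n,\a}^{-1}$, which is \eqref{mvf}.

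First I would produce the right harmonic comparison function. Using the expression of $\cL_\S$ on functions of $\rho$ alone, which by the very definition \eqref{eq:exactq} of $q_\S$ is of the form $\cL_\S(\rho^\beta)=\beta\big[(\beta-1)+(n+\a-1)\big]\rho^{\beta-2}|\de\rho|^2+\beta\,\rho^{\beta-1}q_\S$, the critical exponent $\beta=2-n-\a$ annihilates the bracket, so the hypothesis $q_\S=0$ makes $\Gamma_\S:=\rho^{2-n-\a}$ satisfy $\cL_\S\Gamma_\S=0$ on $\S\setminus\{0\}$ (when $n+\a=2$ one uses $\Gamma_\S=\log\rho$ instead, with the same conclusion). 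This is the step where $\a$-harmonicity enters in an essential way.

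Next I would extract two boundary identities on the gauge sphere $\{\rho=r\}\cap\S$, whose $g$-unit conormal in $\S$ is $N=\de\rho/|\de\rho|$. Since $\cL_\S$ is, by construction, the $\s$-divergence of the $\de$-gradient, the divergence theorem together with $\cL_\S f=0$ gives $\int_{\{\rho=r\}\cap\S}\<\de f,N\>\,d\cH^{n-1}=0$. Moreover the Green vector field $W=\Gamma_\S\,\de f-f\,\de\Gamma_\S$ is $\s$-divergence-free on $\S\setminus\{0\}$ because both $f$ and $\Gamma_\S$ are $\cL_\S$-harmonic there, so its flux $\Phi(r)$ through $\{\rho=r\}\cap\S$ is independent of $r$. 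On $\{\rho=r\}$ the function $\Gamma_\S$ is constant and $\<\de\Gamma_\S,N\>=(2-n-\a)r^{1-n-\a}|\de\rho|$; combined with the vanishing of the flux of $\de f$, this yields $\Phi(r)=-(2-n-\a)r^{1-n-\a}\int_{\{\rho=r\}\cap\S}f|\de\rho|\,d\cH^{n-1}$. By the coarea formula on $(\S,g)$ and the identity $|\nabla_g\rho|=|\de\rho|$, the last integral equals $\tfrac{d}{dr}\int_{B_r\cap\S}f|\de\rho|^2\,d\s$; hence constancy of $\Phi$ forces $\tfrac{d}{dr}\int_{B_r\cap\S}f|\de\rho|^2\,d\s$ to be a constant multiple of $r^{n+\a-1}$, i.e. $\int_{B_r\cap\S}f|\de\rho|^2\,d\s=c_0\,r^{n+\a}$, which is precisely the constancy of $M$.

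The main obstacle is analytic rather than algebraic: all of the integrations by parts and the divergence theorem must be justified across the singular point $0\in\S$, where the Grushin structure, the measure $\s$, the frame $\de_i$ and the comparison function $\Gamma_\S$ all degenerate, as well as across the characteristic set of $\S$ where $|\de\rho|$ may vanish. I would handle $0$ by excising a small gauge ball $\{\rho<\e\}$, running the previous argument on the annulus $\{\e<\rho<r\}\cap\S$, and letting $\e\to0^+$; the characteristic points, being of $\s$-measure zero for a $C^2$ surface, are absorbed by a standard cutoff approximation. The same anisotropic blow-up of $\S$ at $0$ onto its homogeneous tangent hypersurface is what guarantees that $\int_{B_r\cap\S}|\de\rho|^2\,d\s$ is finite and strictly positive, i.e. that $0<C_{\S,n,\a}<\infty$.
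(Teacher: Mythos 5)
Your proposal is correct in outline, but it takes a genuinely different route from the paper's. The shared ingredient is the radial identity \eqref{tollo}: your formula for $\cL_\S(\rho^\beta)$ is exactly \eqref{tollo} with $\phi(\rho)=\rho^\beta$ (note, though, that in the paper this identity is a theorem, proved via the second-order computations \eqref{D2}--\eqref{X^2}; the \emph{definition} of $q_\S$ is the closed formula \eqref{eq:exactq}). From there you run the classical Green-function argument transplanted to $(\S,\s)$: the surface fundamental solution $\Gamma_\S=\rho^{2-n-\a}$ (or $\log\rho$ when $n+\a=2$), flux constancy of $W=\Gamma_\S\,\de f-f\,\de\Gamma_\S$ across the gauge spheres on annuli avoiding the origin, and the coarea formula to convert sphere integrals into $\tfrac{d}{dr}\int_{B_r\cap\S}f|\de\rho|^2\,d\s$, whence the ODE whose integration gives constancy of $M$. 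The paper never introduces boundary integrals: it feeds the one-parameter family of radial test functions $\psi_s=\phi_s\circ\rho$, with $\phi_s$ solving \eqref{eq:difftosolve2}, into the self-adjointness identity \eqref{plot}, so that $\cL_\S\psi_s$ is an exact $s$-derivative of a mollified kernel and the constancy of $s\mapsto m(s)^{-1}\int_\S f\,\chi\big(m(\rho)/m(s)\big)|\de\rho|^2\,d\s$ falls out by differentiation under the integral sign, with indicator functions recovered only at the end. Your route buys conceptual transparency: it exhibits $\rho^{2-n-\a}$ as the fundamental solution of $\cL_\S$ with pole at $0$, the exact analogue of the Euclidean picture. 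The paper's route buys analytic economy: no divergence theorem on annuli, no coarea formula, no level-set regularity, and the sign of $\phi_s'$ is explicit, so the argument transfers verbatim to the subharmonic case of Theorem \ref{thm:SMVF}.

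Two points in your sketch need repair, though neither is fatal. First, the boundary integrals cannot be taken with respect to $\cH^{n-1}$: the measure dictated by the weighted structure (and produced by the coarea formula on $(\S,g_\a)$ with weight $|\x|^\a$) is $|\x|^\a\,d\cH^{n-1}_{g_\a}$; your chain of identities is consistent only because the \emph{same} measure appears in both the flux computation and the coarea step, so it must be named correctly. Second, the ``divergence theorem on a.e.\ annulus'' step implicitly requires regular level sets of $\rho|_\S$; since $\S$ is only $C^2$, Sard's theorem is unavailable once $n\ge 3$, so this step should be replaced by Lipschitz cutoffs in $\rho$ plus Lebesgue differentiation---at which point your argument essentially morphs into the paper's. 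Minor remarks: for a $\y$-graph the only degenerate point near $0$ is $0$ itself, so no separate treatment of a characteristic set is needed; finiteness of $\int_{B_r\cap\S}|\de\rho|^2\,d\s$ requires no blow-up analysis, since $|\de\rho|\le 1$ and $\s$ is locally finite; and positivity holds because $\de\rho\equiv 0$ near $0$ would force $\rho$ to be locally constant on $\S\setminus\{0\}$, contradicting $\rho(0)=0$.
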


Above, the balls are 
\begin{equation}
\label{eq:Br}
B_r = \{\xi\in\R^{n+1}: \rho(\xi)<r\}.
\end{equation} 
and  $|\delta\rho|\leq 1$ is the length of the tangential gradient of $\rho$. When $\S$ is homogeneous, the kernel is $|\delta\rho|^2 = |\x|^{2\a}/\rho^{2\a}$.
In the case of $\a$-subharmonic hypersurfaces, $q_\S\geq 0$, the statement is similar.

\begin{theorem}\label{thm:SMVF} Let $\S\subset\R^{n+1}$ be an $\a$-subharmonic  hypersurface of class $C^2$ with $0\in\S$.
Any function $f \in C^2(\S)$ such that $   \cL_\Sigma f\geq 0$
  satisfies the following sub-mean-value formula at $0$
 \begin{equation}
  \label{smvf}
  f(0) \leq \frac{C_{\S,n,\a} }{r^{n+\a} } \int_{B_r\cap \S }f(\xi) \, |\delta\rho(\xi)|^2  d\sigma,
 \end{equation}
 for all $r\in(0,r_0)$ and for some $r_0>0$ depending on $\S$. The constant
 $0<C_{\S,n,\a}<\infty$ is defined by 
 \begin{equation}
  \label{mvfC2}
\frac{1}{C_{\S,n,\a} } =\lim_{r\to0^+} \frac{1}{r^{n+\a} } \int_{B_r\cap \S }    |\delta\rho(\xi)|^2  d\sigma.
 \end{equation}
 \end{theorem}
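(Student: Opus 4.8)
The plan is to prove \eqref{smvf} by a monotonicity argument for the normalized weighted average
\[
\Psi(r) := \frac{1}{r^{n+\a}}\int_{B_r\cap\S} f\,|\de\rho|^2\,d\s ,\qquad r\in(0,r_0),
\]
following the scheme behind Theorem \ref{MVF} but retaining inequalities. I would first reduce the statement to the two assertions that $\Psi$ is non-decreasing on $(0,r_0)$ and that $\lim_{r\to0^+}\Psi(r)=f(0)/C_{\S,n,\a}$: granting these, $\Psi(r)\ge\Psi(0^+)=f(0)/C_{\S,n,\a}$, which is exactly \eqref{smvf}.

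To differentiate $\Psi$ I would combine three ingredients. First, the coarea formula for $\rho|_\S$ gives $I'(r)=W(r)$, where $I(r)=\int_{B_r\cap\S}f|\de\rho|^2 d\s$ and $W(r)=\int_{\{\rho=r\}\cap\S}f|\de\rho|\,d\mathcal H^{n-1}$, so that $r^{\,n+\a+1}\Psi'(r)=rW(r)-(n+\a)I(r)$. Second, Green's first identity on $B_r\cap\S$ for $\cL_\S=-\sum\de_i^*\de_i$ applied to the pair $(f\rho,\rho)$, together with the pointwise identity produced by the analysis behind Theorem \ref{MVF}, namely $\rho\,\cL_\S\rho=(n+\a-1)|\de\rho|^2+q_\S$ on $\S\setminus\{0\}$ (up to the normalisation of $q_\S$). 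Third, the identity $\rho\,\de\rho=\de(\rho^2/2)$ and a further integration by parts against $h=(\rho^2-r^2)/2$, which vanishes on $\{\rho=r\}\cap\S$. Carrying these out I expect to reach
\[
r^{\,n+\a+1}\Psi'(r)=\tfrac12\int_{B_r\cap\S}(r^2-\rho^2)\,\cL_\S f\,d\s+\int_{B_r\cap\S} f\,q_\S\,d\s .
\]
In this identity the first term is manifestly $\ge0$, since $\cL_\S f\ge0$ and $\rho\le r$ on $B_r$; when $\S$ is $\a$-harmonic and $\cL_\S f=0$ the whole right-hand side vanishes, recovering Theorem \ref{MVF}.

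The delicate point, which I expect to be the main obstacle, is the geometric term $\int_{B_r\cap\S} f\,q_\S\,d\s$: although $q_\S\ge0$ by hypothesis, $f$ is sign-changing, so this term is not non-negative termwise. Here I would exploit the companion identity $\cL_\S(\rho^{\,2-n-\a})=(2-n-\a)\,q_\S\,\rho^{-(n+\a)}$, which shows that the sign of $\cL_\S(\rho^{\,2-n-\a})$ is governed by $q_\S$ (for $n+\a>2$ the hypothesis $q_\S\ge0$ is exactly the statement that $\rho^{\,2-n-\a}$ is $\cL_\S$-superharmonic on $\S\setminus\{0\}$, the remaining range being analogous). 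I would then rewrite the offending term through this relation and Green's second identity, so as to recast it against $\cL_\S f$ via a comparison with the $\cL_\S$-superharmonic profile $\rho^{\,2-n-\a}$. The accompanying inner-boundary integrals must be shown to vanish as the removed geodesic ball shrinks: this is legitimate because $f\in C^2$ while the codimension-two spheres $\{\rho=\e\}\cap\S$ carry $\mathcal H^{n-1}$-measure $O(\e^{\,n+\a-1})\to0$, so $\rho$ degenerates at $0$ at a controlled rate. Establishing that, after this reorganisation, the combined right-hand side is $\ge0$ for arbitrary $f$ is the crux of the argument.

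Finally, for the limit and the constant I would apply the monotonicity to $f\equiv1$ (admissible since $\cL_\S1=0$): then $\Psi_1(r)=r^{-(n+\a)}\int_{B_r\cap\S}|\de\rho|^2 d\s$ is non-decreasing, so the limit in \eqref{mvfC2} exists and is finite, being bounded above by $\Psi_1(r_0)$; its positivity follows from the blow-up of $\S$ at $0$, whose tangent object is a homogeneous hypersurface for which $|\de\rho|^2=|\x|^{2\a}/\rho^{2\a}$ and the corresponding quantity is a strictly positive constant. Hence $0<C_{\S,n,\a}<\oo$. Since $f$ is continuous at $0$, $|\Psi(r)-f(0)\Psi_1(r)|\le\big(\sup_{B_r\cap\S}|f-f(0)|\big)\Psi_1(r)\to0$ as $r\to0^+$, so $\lim_{r\to0^+}\Psi(r)=f(0)/C_{\S,n,\a}$, and the monotonicity of $\Psi$ then yields \eqref{smvf}.
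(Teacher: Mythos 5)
Your overall scheme is the one the paper uses: it proves \eqref{smvf} by showing that $s\mapsto s^{-(n+\a)}\int_{B_s\cap\S}f|\de\rho|^2d\sigma$ is non-decreasing and then letting $s\to0^+$, and its test function $\phi_s$ in \eqref{stop} is exactly your $h$, since in the sharp-cutoff limit $\phi_s(\rho)=(s^2-\rho^2)_+/(2s^{n+\a+1})$. Your derivative identity is then the paper's computation in disguise: with the paper's normalisation of $q_\S$ (your $q_\S$ is the paper's $\rho\, q_\S$, as formula \eqref{tollo} with $\phi(t)=t$ gives $\cL_\S\rho=\tfrac{n+\a-1}{\rho}|\de\rho|^2+q_\S$), the derivative equals $s^{-(n+\a+1)}\bigl[\tfrac12\int_{B_s\cap\S}(s^2-\rho^2)\cL_\S f\,d\sigma+\int_{B_s\cap\S}f\,q_\S\,\rho\,d\sigma\bigr]$. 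The difference is that the paper never isolates the second term: using $\phi_s'\leq0$ and $q_\S\geq0$ it keeps the pointwise inequality $\cL_\S\psi_s\leq\vartheta_s(\rho)|\de\rho|^2$ and pairs it with \eqref{plot2}, i.e. with $0\leq\int_\S f\,\cL_\S\psi_s\,d\sigma$ for the nonnegative radial test function $\psi_s=\phi_s\circ\rho$.

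The genuine gap is the one you flag and then leave open: the sign of the pairing of $f$ with $q_\S$ when $f$ changes sign. Your proposed repair does not close it. Green's second identity for the pair $(f,\rho^{2-n-\a})$ on $B_r\setminus B_\e$ turns $\int f\,\cL_\S(\rho^{2-n-\a})\,d\sigma=(2-n-\a)\int f\,q_\S\,\rho^{1-n-\a}\,d\sigma$ (paper's normalisation) into $\int\rho^{2-n-\a}\cL_\S f\,d\sigma$ plus boundary integrals involving $f$ and $\de f$; this is a differently weighted but equally sign-indefinite pairing of $f$ against $q_\S$ (the weight $\rho^{1-n-\a}$ does not match the weight $\rho$ in the term you must control), the boundary terms have no sign, and no cancellation mechanism is identified. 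So your "crux" remains unproven, and it is a real crux: observe that the paper's own passage from the pointwise bound $\cL_\S\psi_s\leq\vartheta_s(\rho)|\de\rho|^2$ to the integral monotonicity consists precisely in multiplying that pointwise inequality by $f$ and integrating, a step that is justified when $f\geq0$ on $B_{r_0}\cap\S$ but not for sign-changing $f$; only in the harmonic case $q_\S\equiv0$ (Theorem \ref{MVF}) does the issue disappear. In short: for nonnegative $f$ your argument and the paper's coincide and close in the identical way (your remaining steps --- the coarea differentiation, the $f\equiv1$ monotonicity giving existence of the limit \eqref{mvfC2}, the blow-up argument for $0<C_{\S,n,\a}<\infty$, and the continuity argument at $0$ --- are sound, modulo routine care with the weighted measure $\sigma$ in the coarea step); for sign-changing $f$ your proposal is incomplete at its declared crux, and the detour through the $\cL_\S$-superharmonic profile $\rho^{2-n-\a}$ does not supply the missing sign.
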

{ 
The operator $\mathcal L$ in \eqref{eq:cL} and the hyper-surface measure $\sigma$ are invariant with respect to the vertical translations $(x,y)\mapsto (x,y+y_0)$, for any fixed $y_0\in\R$. Theorem \ref{thm:SMVF} can be therefore extended to get mean value formulas at points $(0,y_0)\in \Sigma$ also with $y_0\neq 0$.
Obtaining mean value formulas at points $(x_0,y_0)\in \Sigma$ with $x_0 \neq 0$ is, instead, difficult because  our knowledge of the fundamental solution of $\mathcal L$ with pole at $(x_0,y_0)$ with $x_0\neq 0$ is not explicit enough. }

Our interest in sub-mean value formulas on hypersurfaces of $\R^{n+1}$ endowed with a system of H\"ormander vector fields comes from the theory of minimal surfaces. 
One of the key tools in 
 Bombieri-De Giorgi-Miranda's proof
 of the gradient estimate is the sub-mean value property for sub-harmonic functions on minimal surfaces of the Euclidean space, see~\cite{BDeGM69}.  
   In our setting, a minimal surface is defined by $H_\S=0$. This condition simplifies the structural function $q_\S$, however, this is not sufficient to have $q_\S\geq0$.  
 
 The paper is organized as follows. In Section \ref{S2}, we recall the basic definitions of the measure $\s$, of the $\a$-normal $\nu$ of $\S$, and of mean curvature $H_\S$.
 In Section \ref{S3}, we introduce the various differential operators and we develop a calculus on radial functions.
 The explicit computations of second order derivatives of $\rho$ is crucial, here. In Section \ref{S4}, we prove Theorems \ref{MVF} and \ref{thm:SMVF}.
 Finally, in Section \ref{S5} we study the structural function $q_\S$.

\section{Perimeter and mean curvature of hypersurfaces}
\label{S2}

The \emph{$\a$-perimeter} of a Lebesgue measurable set 
$E\subset\R^{n+1}$ in an open set $A\subset\R^{n+1} $ is
\begin{equation}\label{eq:a_perimeter}
 \P(E;A) =   \sup \left\{ \int_{E} \sum_{i=1}^{n+1} X_i \varphi_i(\xi)\,
d\xi  \, :\, \varphi\in C_c^1(A;\R^{n+1}),\, 
\displaystyle  \max_{\xi \in A } |\varphi(\xi )|  \leq 1
 \right\}.
\end{equation}
We are using the Lebesgue measure $d\xi = d \mathcal L^{n+1} $  in $\R^{n+1}$.
When the boundary of $E$ is locally the graph of a Lipschitz function, its
 $\a$-perimeter 
 has the following integral representation
 (see~\cite[Proposition~2.1]{FM16})
\begin{equation}\label{eq:repr_formula}
    \P(E;A)=\int_{\d E\cap A}\sqrt{|\bar N|^2+|\x|^{2\a}  |N'|^2}\, d\mathcal H^{n}, 
    \end{equation}
where $N(\xi) =(\bar N (\xi)  ,N'(\xi) )\in\R^n\times\R$ is the Euclidean outer unit normal to $\d E$  at the point $\xi$, and 
$
\mathcal H^{n}$ is the standard $n$-dimensional Hausdorff measure in $\R^{n+1}$. On top of its appearance as a sub-Riemannian perimeter, the relevance of the perimeter $\P$ is due to its relation with the Heisenberg perimeter.
When  $\alpha=1$ and $n$ is even, then the Heisenberg perimeter of a set with cylindrical symmetry coincides 
with its $\a$-perimeter, see e.g., \cite[Proposition~2.3]{FM16}.

Motivated by \eqref{eq:repr_formula}, 
when  $\Sigma\subset\R^{n+1}$ is an orientable  hypersurface that is locally a Lipschitz graph and $N=(\bar N,N')$ is
its Euclidean normal, we call the Borel measure on $\Sigma$
\begin{equation}
 \label{eq:aPer_measure}
  \sigma = \sqrt{|\bar N|^2+|\x|^{2\a}  |N'|^2}\,  \mathcal H^{n} \res\S.
\end{equation}
the $\a$-perimeter measure of $\Sigma$.

The regular part of $\S$ is the set $\S^*=\{\xi=(\x,\y) \in\S:\x\neq 0\}$.
At  $\mathcal H^{n} $-a.e.~point $\xi \in\Sigma^*$  we can  define the $\a$-{\emph{normal}} of $\Sigma$ as the vector field
$\nu=\sum_{i=1}^{n+1} \nu_i X_i$ with 
\begin{equation}
\label{eq:nu}
\begin{split}
&     \nu_i =\frac{N_i}{\sqrt{|\bar N|^2+|x|^{2\alpha}  |N'|^2} },\quad \text{for }i=1,\dots,n,\\
  &   \nu_{n+1}   =\frac{a  N_{n+1 }}{\sqrt{|\bar N|^2+|x|^{2\alpha}  |N'|^2} }.    
\end{split}
\end{equation}
With  abuse of notation, we identify $\nu$ with the mapping $\nu:\S^*\to\R^{n+1}$ given by the   vector of its coordinates $\nu (\xi) =(\nu_1(\xi),\ldots,\nu_{n+1}(\xi ))\in\R^{n+1}$ for $\xi\in\Sigma^*$.

    \begin{rmk}[Comparison with the Riemannian structure]
\label{REM}
The hypersurface measure  $\sigma$ and  the $\a$-normal $\nu$ can be interpreted in the following Riemannian terms.
The tensor metric in $\R^{n+1}\setminus\{\x=0\}$  making $X_1,\dots,X_{n+1}$ orthonormal is 
\begin{equation}
\label{eq:g}
g_\a(\xi)=\begin{pmatrix}
I_n&0\\
0&|\x|^{-2\a}
\end{pmatrix},\quad \x\neq  0.
\end{equation}
When $\x=0$ the metric is not defined.
The Riemannian volume associated with $g_\a$  is the measure $\mu=|\x|^{-\alpha} \mathcal L^{n+1}  $ and is singular at $\x=0$.
The Riemannian surface area  associated with $g_\a$ of a hypersurface $\Sigma$  is the measure  
\[
  \mu_\S = |\x|^{-\a}  \sqrt{|\bar N|^2 + |\x|^{2\a} |N'|^2}    \mathcal H^{n}\res\S ,
\]
where $N = (\bar N,N')$ is the Euclidean unit normal.  We deduce that 
Lebesgue measure and $\a$-perimeter are  weighted 
Riemannian volume and hypersurface measures with the same weight:
\[
    \mathcal L^{n+1} = |\x| ^\alpha  \mu\quad\textrm{and}\quad
     \sigma =  |\x| ^\alpha  \mu_\Sigma.
\] 
\end{rmk}

We now focus on the case of graphs. Let  
 $\S=\Sigma_u=\{\xi=(\x,u(\x))\in \R^{n+1} : \x\in\Omega\}$ be the $y$-graph of a function  $u\in C^1(\Omega)$ for some open set $\Omega\subset\R^n$.
 We shall assume that $0\in\Omega$ and let $\Omega^* = \Omega\setminus\{0\}$.
 The  $\a$-{\emph{unit normal}} to $\Sigma_u $ at   points  in $\Sigma_u^*$  is  the mapping $\nu=(\bar\nu,\nu_{n+1}):\Sigma^*\to\R^{n+1} $
\begin{equation}
    \label{eq:nu_graph}
    \bar\nu
    =\frac{-\nabla u}{\sqrt{|\nabla u|^2+|\x|^{2\a} }},\quad 
    \nu_{n+1}=\frac{|\x|^{\a} }{{\sqrt{|\nabla u|^2+|\x|^{2\a} }} }.    
\end{equation}
This normal is pointing upwards.
Notice that $\nu=\nu(\xi)$ only depends on $\x$ and not on $\y=u(\x)$.

  From \eqref{eq:aPer_measure} and from the area-formula, we deduce that 
  the $\s$-area of $\S$ has the integral representation
 \begin{equation}
\label{eq:a_perimeter_graph}
\s(\S_u) = \int_{\Omega}\sqrt{|\nabla u|^2+|\x|^{2\a}}\;d\x= \int_{\Omega}v (\x) \;d\x,
\end{equation}
where $v$ is the $\s$-area element
\begin{equation}
    \label{eq:per_measure}
    { 
    v(\x)=\sqrt{|\nabla u|^2+ a ^{2}},\qquad
    a= |\x| ^{\a}.
    }
\end{equation}

If $\S_u$ minimizes the $\s$-area with respect to compact perturbations in $\Omega$ and $u\in C^2(\Omega)$, then   $u$
satisfies the partial differential equation of the minimal surface-type
\begin{equation}\label{EQ}
\operatorname{div}\left(\frac{\nabla u(\x) }{\sqrt{|\nabla u(\x) |^2+|\x| ^{2\a} }}\right) =0, \quad \x\in  \Omega^*.
\end{equation}
This 
follows by a standard first variation procedure applied to \eqref{eq:a_perimeter_graph}.
This suggests the following definition.

\begin{definition}[$\a$-mean curvature]
Let $\Sigma$ be the $\y$-graph of a function   $u\in C^2(\Omega)$.
We define the \emph{$\a$-mean curvature} of $\Sigma$  at the point $\xi = (\x, u(\x))\in\S^* $  as 
\begin{equation}
\label{eq:H}
    H_\S (\xi ) =\frac{1}{n}\operatorname{div}\left(\frac{\nabla u(\x) }{\sqrt{|\nabla u(\x) |^2+|\x|^{2\a} }}\right).
\end{equation}
We say that $\S$ is an $\a$-minimal hypersurface if $H_\S=0$ on $\Sigma^*$.
\end{definition}
A more geometric definition of $\a$-mean curvature will be presented in the next section.

\section{Tangential operators and Laplacians}
\label{S3}

We introduce   tangential differential operators on  hypersurfaces  in $\R^{n+1}$ endowed with the Grushin structure.
Let $\Sigma\subset\R^{n+1}$ be an embedded hypersurface of class $C^2$ with $\a$-normal  $\nu: \Sigma^* \to\R^{n+1}$.

The $X$-gradient of a function $\phi\in C^1(\R^{n+1})$ is the vector-field $X\phi =\sum_{i=1}
^{n+1} X_i\phi \, X_i$ that we identify, with abuse of notation, with the vector of its coordinates $X\phi = (X_1\phi,\ldots,X_{n+1}\phi)$.
We denote the standard scalar product on $\R^{n+1} $ by $\langle \cdot,\cdot \rangle $.

\begin{definition}[Tangential gradient]
Let $\nu:\Sigma^*\to\R^{n+1}$ be the $\a$-normal of $\Sigma$. 
The tangential gradient on $\Sigma$ is the mapping
$\delta : C^1(\Sigma)\to C(\Sigma^*;\R^{n+1})$ 
\begin{equation}
    \label{eq:delta}
    \de\varphi=X\varphi-\langle X\varphi,\nu\rangle\nu.
\end{equation}
For any $i=1,\ldots,n+1$ we also let $\de_i \varphi = X_i\varphi - \langle X\varphi,\nu\rangle\nu_i$.
\end{definition}

In \eqref{eq:delta}, $\varphi$ is extended outside $\Sigma$ in a $C^1$ way, and the definition will be independent of  this extension.
We are assuming that  $\Sigma$ is oriented and we are fixing a choice of $\a$-normal. The definition does not depend on this choice.  When $\Sigma$ is 
a $y$-graph, we agree that 
$\nu$ is pointing upwards.
In this case,  the $\a$-normal $\nu$ can be extended outside $\Sigma$ in a way that is independent of the variable $\y$. 
In the rest of the paper the surface $\Sigma$ will be always assumed to be a $\y$-graph. 

{ 

The definition of the tangential operator $\de$ in \eqref{eq:delta} is extrinsic. A different possibility could be to define the tangential gradient of functions on $\S$ using
 the Riemannian metric $g_\a$. However,  the choice in \eqref{eq:delta} is the correct one in order to recover the definition in  \eqref{eq:H} of $\a$-mean curvature.
 Indeed, this definition  reads
\begin{equation} \label{sic}
H_\S=-\frac1n \sum_{i=1}^nX_i\nu_i,
\end{equation}
and it can be rephrased in the following way.

}

\begin{lemma}\label{lem:relation_H_nu} Let $\Sigma\subset\R^{n+1}$ be a $\y$-graph of class $C^2$. Then on $\Sigma^*$ we have the identity
\[
H_\S=-\frac 1 n \sum_{i=1}^{n+1}\de_i\nu_i.
\]
\end{lemma}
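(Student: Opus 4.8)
The plan is to start from the reformulation \eqref{sic} of the $\a$-mean curvature, namely $H_\S = -\frac1n\sum_{i=1}^n X_i\nu_i$, and to reduce the claimed identity to showing that the full tangential sum $\sum_{i=1}^{n+1}\de_i\nu_i$ agrees with the horizontal sum $\sum_{i=1}^n X_i\nu_i$. Unfolding the definition \eqref{eq:delta} of $\de_i$ gives
\[
\sum_{i=1}^{n+1}\de_i\nu_i = \sum_{i=1}^{n+1} X_i\nu_i - \sum_{i=1}^{n+1}\<X\nu_i,\nu\>\nu_i,
\]
so it suffices to see that the correction term vanishes and that the $(n+1)$-st horizontal term does not contribute.

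The key observation is that the $\a$-normal is a Euclidean unit vector: from \eqref{eq:nu_graph} one checks directly that $|\nu|^2 = |\bar\nu|^2 + \nu_{n+1}^2 = 1$ on $\S^*$. Differentiating this identity along $X_j$ yields $\sum_{i=1}^{n+1}(X_j\nu_i)\nu_i = \tfrac12 X_j|\nu|^2 = 0$ for every $j$. Hence, rearranging the double sum,
\[
\sum_{i=1}^{n+1}\<X\nu_i,\nu\>\nu_i = \sum_{j=1}^{n+1}\nu_j\sum_{i=1}^{n+1}(X_j\nu_i)\nu_i = 0,
\]
so the correction term drops out and $\sum_{i=1}^{n+1}\de_i\nu_i = \sum_{i=1}^{n+1} X_i\nu_i$.

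It then remains to remove the last term. Since $X_{n+1} = |\x|^\a\d_\y$ and, as noted after \eqref{eq:nu_graph}, $\nu$ can be extended off $\S$ independently of $\y$, I would use $X_{n+1}\nu_{n+1} = |\x|^\a\d_\y\nu_{n+1} = 0$ on $\S^*$. Therefore $\sum_{i=1}^{n+1}X_i\nu_i = \sum_{i=1}^n X_i\nu_i$, and combining with \eqref{sic} gives $H_\S = -\frac1n\sum_{i=1}^{n+1}\de_i\nu_i$, as claimed. There is no serious obstacle here: the whole argument rests on the normalization $|\nu|=1$ — precisely the feature of the $\a$-normal \eqref{eq:nu} that makes $\de$ the Euclidean orthogonal projection onto $\nu^\perp$ — together with the $\y$-independence of $\nu$. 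Both are immediate from the explicit formulas, so the only point to verify carefully is that these computations are valid pointwise on $\S^*$, where $\x\neq0$ and all the expressions are of class $C^1$.
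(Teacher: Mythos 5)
Your proof is correct and follows essentially the same route as the paper's: both expand $\de_i\nu_i = X_i\nu_i - \<X\nu_i,\nu\>\nu_i$, kill the correction term by rearranging the double sum and using $\sum_i \nu_i X_k\nu_i = \tfrac12 X_k|\nu|^2 = 0$, and discard the last horizontal term via $X_{n+1}\nu_{n+1}=0$ (the $\y$-independence of $\nu$), before invoking \eqref{sic}. The only difference is cosmetic: the paper splits off the $i=n+1$ term before summing, while you treat the full sum at once.
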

\begin{proof} 
We   use the definition of $\de$ and observe that $\sum_{i=1}^{n+1}\nu_iX_k\nu_i=X_k(|\nu|^2/2)=0$, and $X_{n+1}\nu_{n+1}=0$, so that  
\[
\begin{split}
    \sum_{i=1}^{n+1}\de_i\nu_i&=
    \sum_{i=1}^{n}\de_i\nu_i+\de_{n+1}\nu_{n+1}\\
    &=\sum_{i=1}^{n}X_i\nu_i
    -\sum_{i=1}^{n}\sum_{k=1}^{n+1}\nu_i\nu_kX_k\nu_i
    +X_{n+1}\nu_{n+1}
    -\nu_{n+1}\nu_kX_k\nu_{n+1}\\
    &=\sum_{i=1}^{n}X_i\nu_i
    -\sum_{k=1}^{n+1}\left(\nu_k\sum_{i=1}^{n+1}\nu_iX_k\nu_i\right)
    +X_{n+1}\nu_{n+1}\\
    &=\sum_{i=1}^{n}X_i\nu_i=-nH_\S .
    \qedhere
\end{split}
\]
\end{proof}

Next we introduce the adjoint operators $\delta_i^*$, integrating by parts with respect to the  measure $\sigma$.

%

\begin{definition}[Adjoint tangential operators]
 For each  $i=1,\dots,n+1$, we define the \emph{adjoint tangential operator} $\de_i^*:C^1(\Sigma)\to C(\Sigma^*)$ 
  through the identity
 \begin{equation} 
 \label{IP}
  \int_\Sigma \psi   \,  \de_i\varphi\, d\s = -\int_\Sigma \varphi\, \de_i^*\psi\, d\s, \quad \varphi,\psi\in C^1_c(\Sigma).
 \end{equation} 
\end{definition}

The explicit formula for adjoint operators is given in the next lemma.

\begin{lemma}
 \label{lem:deltai*} Let $\Sigma\subset\R^{n+1}$ be a hypersurface with $\a$-mean curvature $H_\S$.
 For every $\psi\in C^1_c(\Sigma)$ and $i=1,\ldots,n+1$, we have on $\Sigma^*$
 \begin{equation}
  \label{eq:deltai*}
  \de_i^*\psi=-\de_i\psi-\psi\Big[\de_i\log |x|^\a +\frac{1}{\nu_{n+1}} \Big( \de_{n+1}\nu_i-\de_i\nu_{n+1}\Big) +nH_\S \nu_i\Big].
 \end{equation}
\end{lemma}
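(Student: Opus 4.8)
The plan is to reduce everything to the base domain $\Omega\subset\R^n$ of the graph $\S=\S_u$ via the area formula \eqref{eq:a_perimeter_graph}, and then integrate by parts there against Lebesgue measure. For $\phi\in C^1(\S)$ write $\tilde\phi(x)=\phi(x,u(x))$ for its pull-back to $\Omega$, so that $\int_\S\phi\,d\s=\int_\Omega\tilde\phi\,v\,dx$ with $v=\sqrt{|\nabla u|^2+a^2}$, $a=|x|^\a$, as in \eqref{eq:per_measure}. Since $\de_i^*\psi$ is sought as a function on $\S^*$, it suffices to test \eqref{IP} against $\phi$ supported in $\S^*$, where all coefficients below are smooth; this sidesteps the singular set $\{x=0\}$. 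The first step is to express the pulled-back tangential gradient purely through derivatives on $\Omega$. Using $X_i\phi=\d_{x_i}\phi$ for $i\le n$, $X_{n+1}\phi=a\,\d_y\phi$, the formulas \eqref{eq:nu_graph} for $\nu$, and the chain rule $\d_i\tilde\phi=\d_{x_i}\phi+u_i\d_y\phi$ (with $u_i=\d_i u$), a direct computation shows the $\d_y\phi$ terms cancel, leaving
\[
 \widetilde{\de_i\phi}=\d_i\tilde\phi-\frac{u_i}{v^2}\<\nabla u,\nabla\tilde\phi\>\quad(i\le n),\qquad
 \widetilde{\de_{n+1}\phi}=\frac{a}{v^2}\<\nabla u,\nabla\tilde\phi\>.
\]
In particular $\de_i\phi$ depends only on $\tilde\phi$, as it must.

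Next I would insert these expressions into \eqref{IP}. For $i\le n$, set $w^{(i)}_j=(\delta_{ij}-u_iu_j/v^2)\,v$ (Kronecker $\delta_{ij}$); then the left side of \eqref{IP} equals $\int_\Omega\sum_j w^{(i)}_j\,\tilde\psi\,\d_j\tilde\phi\,dx$, and integrating by parts in $\Omega$ (no boundary terms, as $\phi,\psi\in C^1_c(\S^*)$) splits it as
\[
 -\int_\Omega\tilde\phi\,\Big(\sum_j w^{(i)}_j\,\d_j\tilde\psi\Big)\,dx\;-\;\int_\Omega\tilde\phi\,\tilde\psi\,\Big(\sum_j\d_j w^{(i)}_j\Big)\,dx.
\]
The first integral equals $-\int_\S\phi\,\de_i\psi\,d\s$, because $\sum_j w^{(i)}_j\d_j\tilde\psi=v\,\widetilde{\de_i\psi}$; this reproduces the term $-\de_i\psi$. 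Multiplying and dividing by $v$ in the second integral and comparing with the right side of \eqref{IP}, I read off
\[
 \de_i^*\psi=-\de_i\psi-\psi\,\frac{1}{v}\sum_{j=1}^n\d_j w^{(i)}_j.
\]
The case $i=n+1$ is entirely analogous, with $w^{(i)}_j$ replaced by $a u_j/v$.

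The main obstacle is the last step: matching the coefficient $\tfrac1v\sum_j\d_j w^{(i)}_j$ with the geometric bracket in \eqref{eq:deltai*}. This is a direct but delicate computation with the explicit derivatives of $v$, $a$ and $\nu$. Writing $b_j=\tfrac12\d_j(a^2)=a^2\,\d_j\log a$, the useful intermediate identities are $v\,\d_j v=\sum_k u_ku_{jk}+b_j$, the mean-curvature formula $nH_\S=\sum_j\d_j(u_j/v)$ coming from \eqref{eq:H}, and, on $\S^*$, $\nu_i=-u_i/v$, $\nu_{n+1}=a/v$. Carrying out the differentiation produces, for $i\le n$, the clean cancellation
\[
 \frac1v\sum_{j=1}^n\d_j w^{(i)}_j=nH_\S\,\nu_i+\frac{b_i}{v^2},
\]
while computing $\de_i\log|x|^\a$, $\de_{n+1}\nu_i$ and $\de_i\nu_{n+1}$ from the pull-back formulas above gives
\[
 \de_i\log|x|^\a+\frac{1}{\nu_{n+1}}\big(\de_{n+1}\nu_i-\de_i\nu_{n+1}\big)=\frac{b_i}{v^2}.
\]
Combining the two displays yields \eqref{eq:deltai*} for $i\le n$. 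For $i=n+1$ the middle term of the bracket vanishes identically, and the same scheme gives $\tfrac1v\sum_j\d_j(au_j/v)=\de_{n+1}\log|x|^\a+nH_\S\,\nu_{n+1}$, which is exactly \eqref{eq:deltai*} in that case.

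I expect the bookkeeping in these last two identities — in particular tracking the $a$-dependent terms $b_j$ through the combination $\tfrac1{\nu_{n+1}}(\de_{n+1}\nu_i-\de_i\nu_{n+1})$, where several $\d_i v$ and higher-order terms must cancel — to be the most error-prone part. I would therefore organize the whole computation around the single quantity $b_j$ and the relation $v\,\d_j v=\sum_k u_ku_{jk}+b_j$, which is what makes the cancellations transparent.
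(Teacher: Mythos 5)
Your proposal is correct and follows essentially the same route as the paper's own proof: both reduce matters to the base domain $\Omega$ via the area formula \eqref{eq:a_perimeter_graph}, integrate by parts there against Lebesgue measure, and then identify the resulting zeroth-order coefficient with the geometric bracket in \eqref{eq:deltai*} — your bookkeeping through $w^{(i)}_j$ and $b_j$ (which I checked: both displayed identities hold) is just a coordinate-explicit version of the paper's identities for $\de_i v/v$ and $\sum_{k}X_k(\nu_i\nu_k)$. One caveat you share with the paper: reading the adjoint off from \eqref{IP} with its printed minus sign would actually flip the signs in \eqref{eq:deltai*}; like the paper's proof, which starts from $\int_\S\phi\,\de_i^*\psi\,d\s=\int_\S\psi\,\de_i\phi\,d\s$, you have implicitly used the convention without that minus sign, which is the one consistent with Lemma \ref{DeltaL} and with \eqref{plot}.
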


\begin{proof}
 Let $\phi,\psi\in C^1_c(\S^*)$.  Then by the area formula 
 \eqref{eq:a_perimeter_graph} with $v$ as in \eqref {eq:per_measure}
 \begin{equation}
  \begin{split}
   \int_\S\phi\, \de_i^*\psi\, d\s&=\int_\S\psi\, \de_i\phi\, d\s=
   \int_\Omega\psi(X_i\phi-\<X\phi,\nu\>\nu_i)vd\x\\
   &=-\int_\Omega\phi\Big[X_i(v\psi)-\sum_{k=1}^nX_k(\nu_i\nu_kv\psi)\Big]d\x\\
   &=-\int_\S\phi\frac{A}{v}d\s,
  \end{split}
 \end{equation}
 where in the last identity we set $A=X_i(v\psi)-\sum_{k=1}^nX_k(\nu_i\nu_kv\psi)$. We are left to prove that $A/v$ is equal to the right-hand side in \eqref{eq:deltai*}.
 
 We have
 \begin{equation}
  \begin{split}
   -\frac{A}{v}&=-\frac{1}{v}\left[\psi X_iv + vX_i\psi-v\nu_i\<X\psi,\nu\>-\psi\sum_{k=1}^nX_k(\nu_i\nu_kv)\right]\\
   &=-\de_i\psi-\frac{\psi}{v}\left[X_iv-\nu_i\<Xv,\nu\>-v\sum_{k=1}^nX_k(\nu_i\nu_k)\right].
  \end{split}
 \end{equation}
 In the term within brackets above, we easily recognize $X_iv-\nu_i\<Xv,\nu\>=\de v$. On the other hand, using $X_{n+1}\nu=0$ and \eqref{sic}, we get
 \begin{equation}
  \sum_{k=1}^n X_k(\nu_i\nu_k)=-n\nu_iH_\S -
  \frac{1}{\nu_{n+1}}\Big[X_{n+1}\nu_i-\nu_{n+1}\<X\nu_i,\nu\>\Big]
  =-n\nu_iH _\S- \frac{1}{\nu_{n+1}}\de_{n+1}\nu_i.
 \end{equation}
 Summarizing, we have
 \begin{equation}
  \frac{A}{v}=\de_i\psi+\psi\left[\frac{\de_i v}{v}+\frac{\de_{n+1}\nu_i}{\nu_{n+1}}+nH_\S \nu_i\right].
 \end{equation}
 To prove our claim it then remains to check the following identity
 \begin{equation}
  \frac{\de_iv}{v}=\frac{\de_ia}{a}-\frac{\de_i\nu_{n+1}}{\nu_{n+1}}=\de_i(\log a)-\frac{\de_i\nu_{n+1}}{\nu_{n+1}}.
 \end{equation}
 Indeed, since $\nu_{n+1}=a/v$ we have
 \begin{equation}
  \frac{\de_ia}{a}-\frac{\de_i\nu_{n+1}}{\nu_{n+1}}=\frac{\de_ia}{a}-\frac{v\de_i(\frac{a}{v})}{a}=\frac{\de_ia}{a}-\frac{v}{a}\left(\frac{\de_ia}{v}-\frac{a}{v^2}\de_iv\right)=\frac{\de_iv}{v}.
 \end{equation}
\end{proof}

\begin{definition}[Tangential Laplacians] Let $\Sigma\subset\R^{n+1}$ be a hypersurface of class $C^2$.
The \emph{tangential Laplacian} of $\Sigma$ is the operator 
$  \cL_\Sigma:C^2(\Sigma)\to C(\S^*)$
\begin{equation}
\label{eq:Delta*_Sigma}
    \cL_\Sigma\varphi=-\sum_{i=1}^{n+1}\de_i^*\de_i\varphi.
\end{equation}
\end{definition}

The relation between  $\cL_\Sigma$  and the non-adjoint Laplacian 
\begin{equation}
\label{eq:Delta_Sigma}
    \Delta_\Sigma\varphi=\sum_{i=1}^{n+1}\de_i^2 \varphi.
\end{equation}
 is described in the   following proposition.

\begin{lemma}
 \label{DeltaL} For any  $\varphi\in C^2(\Sigma)$  we have the identity
 \begin{equation}\label{fif}
  \cL_\Sigma\phi=\Delta_\Sigma\phi + \nu_{n+1}^2\langle \de\phi,\de \log a  \rangle + \de_{n+1}\phi\,\de_{n+1} \log a
 \end{equation}
\end{lemma}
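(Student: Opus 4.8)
The plan is to read off everything from the explicit adjoint formula \eqref{eq:deltai*} and then simplify. First I would substitute $\psi=\de_i\phi$ into Lemma \ref{lem:deltai*}, which (writing $a=|x|^\alpha$) gives
\[
\de_i^*\de_i\phi = -\de_i^2\phi - \de_i\phi\,\Big[\de_i\log a + \tfrac{1}{\nu_{n+1}}\big(\de_{n+1}\nu_i - \de_i\nu_{n+1}\big) + nH_\Sigma\,\nu_i\Big],
\]
and then sum over $i=1,\dots,n+1$. By the definitions \eqref{eq:Delta*_Sigma} and \eqref{eq:Delta_Sigma} this produces $\cL_\Sigma\phi=\Delta_\Sigma\phi+E$, where $E$ collects the three bracketed contributions each weighted by $\de_i\phi$. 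The first observation is that the mean-curvature term drops out: since $|\nu|=1$ by \eqref{eq:nu_graph}, the tangential gradient is orthogonal to the normal, $\langle\de\phi,\nu\rangle=\sum_i\de_i\phi\,\nu_i=0$, so $nH_\Sigma\sum_i\de_i\phi\,\nu_i=0$.

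Next I would use that, for a $\y$-graph, both $\nu$ and $a$ are independent of $\y$, so that $X_{n+1}\nu=0$ and $X_{n+1}\log a=0$. Writing $\de_{n+1}\nu_i=-\langle X\nu_i,\nu\rangle\nu_{n+1}$ and $\de_i\nu_{n+1}=X_i\nu_{n+1}-\langle X\nu_{n+1},\nu\rangle\nu_i$ and invoking $\langle\de\phi,\nu\rangle=0$ once more, the error term reduces to
\[
E=\langle\de\phi,\de\log a\rangle-\sum_i\de_i\phi\,\langle X\nu_i,\nu\rangle-\frac{1}{\nu_{n+1}}\langle\de\phi,X\nu_{n+1}\rangle.
\]
Then I would exploit the relation $\nu_{n+1}=a/v$ coming from \eqref{eq:nu_graph}–\eqref{eq:per_measure}: it gives $X\nu_{n+1}=\nu_{n+1}(X\log a-X\log v)$, and since $\de\phi\perp\nu$ lets me replace $X$ by its tangential projection inside any inner product with $\de\phi$, the last term equals $\langle\de\phi,\de\log a\rangle-\langle\de\phi,\de\log v\rangle$. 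The two copies of $\langle\de\phi,\de\log a\rangle$ cancel, leaving the compact expression $E=\langle\de\phi,\de\log v\rangle-\sum_i\de_i\phi\,\langle X\nu_i,\nu\rangle$.

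The final step, which I expect to be the main obstacle, is to verify that this $E$ coincides with the claimed $\nu_{n+1}^2\langle\de\phi,\de\log a\rangle+\de_{n+1}\phi\,\de_{n+1}\log a$. Here I would pass to graph coordinates, inserting $\nu_i=-\partial_{x_i}u/v$ and $\nu_{n+1}=a/v$, and computing both the directional derivatives $\langle X\nu_i,\nu\rangle=\langle\bar\nu,\nabla_{\bar x}\nu_i\rangle$ and the vector $\de\log v$ explicitly, as well as expanding $\de_{n+1}\phi$, $\de_{n+1}\log a$ and $\langle\de\phi,\de\log a\rangle$ in the same coordinates. The key algebraic input is the identity $\sum_k\partial_{x_k}u\,\partial_{x_k}\partial_{x_i}u=v\,\partial_{x_i}v-a\,\partial_{x_i}a$, obtained by differentiating $v^2=|\nabla u|^2+a^2$. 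With it, all terms containing $\nabla v$, together with the cross-quantities $\sum_k\partial_{x_k}u\,\partial_{x_k}v$ and $\langle\nabla_{\bar x}\phi,\nabla u\rangle$, cancel in both $E$ and the target, and each reduces to $\tfrac{a}{v^2}\big(\langle\nabla_{\bar x}\phi,\nabla a\rangle+\partial_y\phi\,\langle\nabla u,\nabla a\rangle\big)$. Matching the two expressions closes the argument; the bookkeeping is lengthy but entirely elementary once this cancellation pattern is spotted.
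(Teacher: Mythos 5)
Your proof is correct; I verified the computations you defer as ``bookkeeping,'' and they close exactly as you predict. Up to the reduction of the error term, your argument coincides with the paper's: both substitute $\psi=\de_i\phi$ into Lemma \ref{lem:deltai*}, sum over $i$, and kill the curvature term via $\<\de\phi,\nu\>=0$. The divergence is in how the remaining term $\tfrac{1}{\nu_{n+1}}\sum_i\de_i\phi\,(\de_{n+1}\nu_i-\de_i\nu_{n+1})$ is handled. The paper establishes the closed-form vector identity
\[
\de\nu_{n+1}-\de_{n+1}\nu=\nu_{n+1}\big[\de\log a-(\bar 0,\de_{n+1}\log a)\big]+\nu_{n+1}^2\big[\nu\,\de_{n+1}\log a-\nu_{n+1}\de\log a\big],
\]
so that pairing with $\de\phi$ and orthogonality yield \eqref{fif} in two lines, the whole argument staying inside the $\de$--$\nu$ calculus. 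You instead exploit $X_{n+1}\nu=0$ and $\nu_{n+1}=a/v$ to reduce the error to $\<\de\phi,\de\log v\>-\sum_i\de_i\phi\<X\nu_i,\nu\>$ and then verify the identity in graph coordinates. Your endgame claims are accurate: using $\sum_k\partial_{x_k}u\,\partial_{x_k}\partial_{x_i}u=v\,\partial_{x_i}v-a\,\partial_{x_i}a$, every term containing $\nabla v$ or $\<\nabla v,\nabla u\>$ cancels on both sides, and each side reduces to $\tfrac{a}{v^2}\big(\<\nabla_x\phi,\nabla a\>+\partial_y\phi\<\nabla u,\nabla a\>\big)$ --- I redid this reduction and it is exact, not merely plausible. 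As for what each route buys: the paper's is shorter and coordinate-free at the surface level, though its vector identity is itself asserted with no more detail than your cancellation pattern; yours is longer but completely mechanical once the differentiated identity for $v^2$ is in hand, and it makes explicit the geometric ingredients --- the area element $v$ and the Weingarten-type quantities $\<X\nu_i,\nu\>$ --- that the paper's formulation keeps implicit.
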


\begin{proof}
 We have
 \begin{equation}
  \cL_\S \phi=-\sum_{i=1}^{n+1}\de_i^*\de_i=\De_\S\phi+\<\de\phi,\de\log a\>-\frac{1}{\nu_{n+1}}\<\de\phi,\de\nu_{n+1}-\de_{n+1}\nu\>.
 \end{equation}
 By formula \eqref{eq:deltai*}, for $i=1,\dots,n+1$ we have
 \begin{equation}
  \de\nu_{n+1}-\de_{n+1}\nu
  =\nu_{n+1}\big[\de\log a-\big(\bar0,\de_{n+1}\log a\big)\big]+
  \nu_{n+1}^2\big[\nu\de_{n+1}\log a-\nu_{n+1}\de\log a\big].
 \end{equation}
 Since $\<\de(\cdot),\nu\>=0$, we deduce
 \begin{equation}
  \begin{split}
  -\frac{1}{\nu_{n+1}}\<\de\phi,\de\nu_{n+1}-\de_{n+1}\nu\>=
  &-\<\de\phi,\de\log a\> + \de_{n+1}\phi\de_{n+1}\log a\\
  &+\nu_{n+1}^2\<\de\phi,\de\log a\>,
 \end{split}
 \end{equation}
 proving the result.
\end{proof}

The formal Hessian of $\varphi$ with respect to the vector-fields $X_1,\ldots,X_{n+1}$ is the $(n+1) \times (n+1) $ matrix
\[
       \X \phi  =\big(X_i X_j\varphi   )_{i,j=1,\ldots,n+1}.
\]
The non-adjoint Laplacian $\Delta_\S$ has a clear representation in terms of the Grushin operator $\cL$ in \eqref{eq:cL}, $\X$ and $\a$-mean curvature of $\S$.

\begin{lemma} \label{AMB}
Let $\Sigma\subset\R^{n+1}$ be a hypersurface of class $C^2$ with $\a$-mean curvature $H_\S$. 
 For any $\phi\in C^2(\R^{n+1})$ we have the identity on $\S^*$ 
 \begin{equation}\label{fuf}
    \De_\S\phi = \cL\phi-\<(\X\phi)\nu,\nu\>+nH_\S \<X\phi,\nu\>.
 \end{equation}
\end{lemma}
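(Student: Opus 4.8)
The plan is to expand the definition $\De_\S\phi=\sum_{i=1}^{n+1}\de_i^2\phi=\sum_{i=1}^{n+1}\de_i(\de_i\phi)$ directly and collect terms, exploiting only three elementary facts: that $\sum_{i=1}^{n+1}X_i^2=\cL$; that $\sum_{i=1}^{n+1}X_i\nu_i=-nH_\S$, which is \eqref{sic} together with $X_{n+1}\nu_{n+1}=0$ (valid since $\nu$ is independent of $\y$); and that $|\nu|^2=1$ forces $\sum_{i=1}^{n+1}\nu_iX_k\nu_i=\tfrac12X_k|\nu|^2=0$ for every $k$. Writing $g=\<X\phi,\nu\>$ for brevity, the first-order operator is $\de_i\phi=X_i\phi-g\,\nu_i$, so applying $\de_i$ once more to the scalar $\de_i\phi$ gives
\[
\de_i^2\phi=X_i(\de_i\phi)-\<X(\de_i\phi),\nu\>\nu_i .
\]
I would then treat the ``divergence part'' $\sum_i X_i(\de_i\phi)$ and the ``correction part'' $-\sum_i\<X(\de_i\phi),\nu\>\nu_i$ separately.

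For the divergence part I expand $\sum_i X_i(X_i\phi-g\,\nu_i)=\cL\phi-\sum_i\big(X_ig\,\nu_i+g\,X_i\nu_i\big)$; using $\sum_iX_i\nu_i=-nH_\S$ this collapses to $\cL\phi-\<Xg,\nu\>+nH_\S\,g$. For the correction part I compute, for each $i$,
\[
\<X(\de_i\phi),\nu\>=\sum_k\nu_kX_kX_i\phi-\nu_i\<Xg,\nu\>-g\sum_k\nu_kX_k\nu_i ,
\]
and then multiply by $\nu_i$ and sum over $i$: the first term yields $\<(\X\phi)\nu,\nu\>$ (the order of $X_kX_i$ versus $X_iX_k$ being immaterial after the symmetric double sum), the second yields $\<Xg,\nu\>$ because $|\nu|^2=1$, and the third vanishes since $\sum_i\nu_iX_k\nu_i=0$. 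Hence the correction part equals $-\<(\X\phi)\nu,\nu\>+\<Xg,\nu\>$.

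Adding the two parts, the terms $\mp\<Xg,\nu\>$ cancel and I am left with $\De_\S\phi=\cL\phi-\<(\X\phi)\nu,\nu\>+nH_\S\,g$, which is exactly \eqref{fuf} once $g$ is restored to $\<X\phi,\nu\>$. The only point requiring genuine care is the bookkeeping in the correction part: since the $X_i$ do not commute in general, one must verify that the Hessian term reappears correctly symmetrized, and one must not overlook the cancellation of the two $\<Xg,\nu\>$ contributions, which is the structural reason the first-order cross terms drop out. Everything else is routine differentiation.
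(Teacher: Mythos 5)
Your proof is correct and follows essentially the same route as the paper: a direct term-by-term expansion of $\sum_{i=1}^{n+1}\de_i^2\phi$, identifying the double contraction $\sum_{i,k}\nu_i\nu_k X_kX_i\phi$ with $\<(\X\phi)\nu,\nu\>$ and using $\sum_i X_i\nu_i=-nH_\S$ together with $\sum_i \nu_i X_k\nu_i=0$. The only difference is bookkeeping: where the paper cites Lemma~\ref{lem:relation_H_nu} (i.e.\ $\sum_i\de_i\nu_i=-nH_\S$) and the tangentiality identity $\<\de(\<X\phi,\nu\>),\nu\>=0$, you inline both computations, your cancellation of the two $\<Xg,\nu\>$ contributions being exactly that orthogonality unpacked via $|\nu|^2=1$.
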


\begin{proof}
 We first compute $\de_i^2\phi$, for $i\geq1$. We have
 \begin{equation}
  \begin{split}
   \de_i^2\phi&=\de_i(X_i\phi-\<X\phi,\nu\>\nu_i)\\
   &=X_iX_i\phi-\<XX_i\phi,\nu\>\nu_i - \<X\phi,\nu\>\de_i\nu_i - \de_i(\<X\phi,\nu\>)\nu_i.
  \end{split}
 \end{equation}
Summing over $i$, we obtain the identities
 \begin{align}
  &\sum_{i=1}^{n+1}\<XX_i\phi,\nu\>\nu_i=\<(\X\phi)\nu,\nu\>,\quad 
  \sum_{i=1}^{n+1}\de_i\nu_i=-nH_\S,\\
  &\sum_{i=1}^{n+1}\de_i(\<X\phi,\nu\>)\nu_i=\<\de(\<X\phi,\nu\>),\nu\>=0,
 \end{align}
 and this completes the proof.
\end{proof}

\newcommand{\G}{\Gamma}

\renewcommand{\rho}{\varrho}

We specialize the previous formulas to the case when $\phi$ is a radial function around $0\in\R^{n+1}$. The symmetry is governed by the gauge function $\rho$ in 
\eqref{def:Gauge}.
Below, we collect the differential identities concerning first and second order derivatives of $\rho$. With the notation $\xi=(\x,\y)$ and $\rho=\rho(\xi)$ we have
\begin{align}
 &\nabla_\x \rho=\x|\x|^{2\a}\rho^{-(2\a+1)}\quad \textrm{and} \quad
 \d_\y\rho=(\a+1)\y\rho^{-(2\a+1)}.
\end{align}
Then the squared norm of the $X$-gradient of $\rho$ is  
\begin{equation}\label{square}
 |X\rho|^2=|\nabla_\x\rho|^2+|x|^{2\a}|\d_\y\rho|^2=|\x|^{2\a}\rho^{-2\a}. 
\end{equation}
The second derivatives of $\rho$ are, with $i,j=1,\dots,n$ and denoting by $\varepsilon_{ij}$ the Kronecker symbol, 
\begin{equation}
\label{D2}
\begin{split}
 X_iX_j\rho&=
 |\x|^{2\a}\Big[\e_{ij}+2\a\frac{x_ix_j}{|\x|^2}-(2\a+1)\frac{x_ix_j|\x|^{2\a}}{\rho^{2(\a+1)}}\Big],
 \\
 X_iX_{n+1}\rho
 &= (\a+1)|\x|^{™2\a}x_i\y\rho^{-2\a-1}\Big[\frac{\a}{|\x|^{\a+2}}-(2\a+1)\frac{|\x|^{2\a}}{\rho^{2(\a+1)}}\Big],
 \\
 X_{n+1}X_j\rho&=-(2\a+1)(\a+1)|\x|^{3\a}x_j\y\rho^{-4\a-3},\\
 X_{n+1}^2\rho&=(\a+1)|\x|^{2\a}\rho(x)^{-2\a-1}\Big[1-(2\a+1)(\a+1)\frac{\y^2}{\rho^{2(\a+1)}}\Big].
\end{split}
\end{equation}
From \eqref{D2},  we get the following   formulas for the   Laplacian $\cL$ and for the quadratic form $\<(X^2\rho)\nu,\nu\>$:
\begin{equation} \label{follo}
 \cL\rho=(n+\a)\frac{|X\rho|^2}{\rho},
\end{equation}
and
\begin{equation}
\label{X^2}
 \begin{split}
  \<(\X\rho)\nu,\nu\>=|\bar\nu|^2 &+ 2\a\frac{\<\x,\bar\nu\>^2}{|\x|^2} - (2\a+1)\frac{|\x|^{2\a}\<\x,\bar\nu\>^2}{\rho^{2(\a+1)}}\\
  &+   (\a+1)\<\x,\bar\nu\>^2\nu_{n+1}\y\left(\frac{\a}{|\x|^{\a+2}} - 2(2\a+1)\frac{|\x|^{2\a}\<\x,\bar\nu\>^2}{\rho^{2(\a+1)}}\right)\\
  &+   (\a+1)\nu_{n+1}^2\left(1-(\a+1)(2\a+1)\frac{\y^2}{\rho^{2(\a+1)}}\right).
 \end{split}
\end{equation}

Let $\S\subset\R^{n+1}$ be a hypersurface with $\a$-normal $\nu$ and $\a$-mean curvature $H_\S$.
The structural function  $q_\S:\S^*\to\R$ introduced in \eqref{eq:exactq}  governs the harmonicity of $\S$ at $0\in\S$ and appears in the following formula.

\begin{theorem}[Tangential Laplacian of radial functions] For any $\phi\in C^2(\R^+)$, the function 
 $\psi\in C^2(\R^{n+1}\setminus\{0\})$, $\psi=\phi\circ\rho$,  satisfies the identity
 \begin{equation}\label{tollo}
  \cL_\S\psi=\Big\{ \phi''(\rho) +\phi'(\rho)\frac{n+\a-1}{\rho}\Big\} |\de\rho|^2+q_\S \, \phi'(\rho).
 \end{equation}\end{theorem}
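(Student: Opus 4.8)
The plan is to compute $\cL_\S\psi$ directly, keeping track separately of the terms carrying $\phi''(\rho)$ and those carrying $\phi'(\rho)$, and postponing all the genuinely geometric content to a single scalar identity at the very end. First I would record the chain rule for $\psi=\phi\circ\rho$: one has $X_i\psi=\phi'(\rho)\,X_i\rho$ and $X_iX_j\psi=\phi''(\rho)\,X_i\rho\,X_j\rho+\phi'(\rho)\,X_iX_j\rho$. Consequently $X\psi=\phi'(\rho)X\rho$, $\cL\psi=\phi''(\rho)|X\rho|^2+\phi'(\rho)\cL\rho$, and $\<(\X\psi)\nu,\nu\>=\phi''(\rho)\<X\rho,\nu\>^2+\phi'(\rho)\<(\X\rho)\nu,\nu\>$; likewise $\de\psi=\phi'(\rho)\de\rho$ and $\de_{n+1}\psi=\phi'(\rho)\de_{n+1}\rho$, so every first-order tangential quantity carries the factor $\phi'(\rho)$.

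Next I would chain Lemma \ref{DeltaL} and Lemma \ref{AMB}: inserting \eqref{fuf} into \eqref{fif} yields
\[
\cL_\S\psi=\cL\psi-\<(\X\psi)\nu,\nu\>+nH_\S\<X\psi,\nu\>+\nu_{n+1}^2\<\de\psi,\de\log a\>+\de_{n+1}\psi\,\de_{n+1}\log a.
\]
Substituting the chain-rule identities, the right-hand side becomes $\phi''(\rho)\,\big(|X\rho|^2-\<X\rho,\nu\>^2\big)$ plus $\phi'(\rho)$ times a purely geometric coefficient. The $\phi''$-part is immediate: since $|\nu|=1$ one has $|\de\rho|^2=|X\rho|^2-\<X\rho,\nu\>^2$, which reproduces exactly the term $\phi''(\rho)|\de\rho|^2$ in \eqref{tollo}.

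It then remains to identify the coefficient of $\phi'(\rho)$, namely
\[
\mathcal B=\cL\rho-\<(\X\rho)\nu,\nu\>+nH_\S\<X\rho,\nu\>+\nu_{n+1}^2\<\de\rho,\de\log a\>+\de_{n+1}\rho\,\de_{n+1}\log a,
\]
with $\tfrac{n+\a-1}{\rho}|\de\rho|^2+q_\S$. Writing $P=\<X\rho,\nu\>$ and $\beta=\<\nabla_\x\log|\x|,\bar\nu\>$, I would first put the two $\log a$-terms in closed form. Since $\log a=\a\log|\x|$ does not depend on $\y$, the field $X\log a=(\a\nabla_\x\log|\x|,0)$ has vanishing last component, so $\<X\log a,\nu\>=\a\beta$ and $\de_{n+1}\log a=-\a\beta\,\nu_{n+1}$. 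Using $\<\de\rho,\nu\>=0$ and \eqref{square} one finds $\<X\rho,X\log a\>=\a|X\rho|^2/\rho$, hence $\<\de\rho,\de\log a\>=\a|X\rho|^2/\rho-\a\beta P$, and the two correction terms collapse, after cancellation of the $\a\beta P\nu_{n+1}^2$ contributions, to $\a\nu_{n+1}^2|X\rho|^2/\rho-\a\beta\,\nu_{n+1}X_{n+1}\rho$.

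The heart of the proof, and the step I expect to be the main obstacle, is then the scalar identity obtained after cancelling the common $nH_\S\<X\rho,\nu\>$ and using $\cL\rho=(n+\a)|X\rho|^2/\rho$ from \eqref{follo} together with $|\de\rho|^2=|X\rho|^2-P^2$, namely
\[
\frac{|X\rho|^2}{\rho}-\<(\X\rho)\nu,\nu\>+\a\nu_{n+1}^2\frac{|X\rho|^2}{\rho}-\a\beta\,\nu_{n+1}X_{n+1}\rho=\frac{(2\a+1)P^2}{\rho}-2\a\beta P.
\]
Here I would substitute the explicit quadratic form \eqref{X^2} for $\<(\X\rho)\nu,\nu\>$ and the second derivatives \eqref{D2}, and organize the resulting terms in $|\bar\nu|^2$, $\<\x,\bar\nu\>^2/|\x|^2$, $\nu_{n+1}^2$ and the mixed $x_i\y$-terms; this bookkeeping is heavy but entirely mechanical. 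As a consistency check I would verify that both sides vanish in the homogeneous case $P=\<X\rho,\nu\>=0$. Finally, undoing the substitution via $\<X\log\rho,\nu\>=P/\rho$ and $\<\nabla_\x\log|\x|,\bar\nu\>=\beta$ recovers $q_\S$ in the form \eqref{eq:exactq}, completing the identification and hence \eqref{tollo}.
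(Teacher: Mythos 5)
Your proposal is correct and follows essentially the same route as the paper's proof: both combine Lemma \ref{DeltaL} with Lemma \ref{AMB}, apply the chain rule to $\psi=\phi\circ\rho$ so that the $\phi''$-coefficient is $|X\rho|^2-\<X\rho,\nu\>^2=|\de\rho|^2$, and then identify the $\phi'$-coefficient with $\tfrac{n+\a-1}{\rho}|\de\rho|^2+q_\S$ by an algebraic verification based on \eqref{follo}, \eqref{D2}, \eqref{X^2} and \eqref{zorro}. The only difference is organizational --- the paper routes the residual algebra through the intermediate expression \eqref{kiss} for $q_\S$, whereas you isolate a single scalar identity (which does hold: setting $S=|\x|^\a\<\x,\bar\nu\>+(\a+1)\y\nu_{n+1}$, both sides equal $\tfrac{|X\rho|^2}{\rho}\big[(2\a+1)S^2\rho^{-2(\a+1)}-2\a\<\x,\bar\nu\>S\,|\x|^{-\a-2}\big]$) --- and both proofs leave that final bookkeeping to the reader.
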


\begin{proof}
In a first step, we prove that formula \eqref{tollo} holds with the following expression for $q_\S$: 
 \begin{equation}
  \begin{split} \label{kiss}
  q_\S =-\frac{n+\a-1}{\rho}|\de\rho|^2&+(n+\a)\frac{|X\rho|^2}{\rho}-\<(X^2 \rho)\nu,\nu\>+nH_\S \<X\rho,\nu\>\\
  &+\a\nu_{n+1}^2\<\de\rho,\de\log|x|\>+\a\de_{n+1}\rho\de_{n+1}\log|x|.
  \end{split}  
 \end{equation}
 The proof combines formula \eqref{fif} of Lemma \ref{DeltaL} and formula \eqref{fuf} in Lemma \ref{AMB}: 
 \begin{align}
  \De_\S\psi&=\cL(\psi\circ\rho) - \<X^2(\psi\circ\rho)\nu,\nu\>+nH_\S \<X(\psi\circ\rho),\nu\>\\
  &=\phi''(\rho)|\de\rho|^2+\phi'(\rho)\big(\cL\rho-\<(X^2 \rho)\nu,\nu\>+nH_\S\<X\rho,\nu\>\big),
 \end{align}
 where the last identity is a simple computation with the chain rule. On the other hand, we have
 \begin{equation}
  \cL_\S\psi=\De_\S\psi+\a\nu_{n+1}^2\<\de\psi,\de(\log|\x|)\>+\a\de_{n+1}\psi\de_{n+1}(\log|\x|).
 \end{equation}
 Since $\de\psi=\phi'(\rho)\de\rho$, we deduce
 \begin{align}
  \cL_\S\psi=\phi''(\rho)|\de\rho|^2+\phi'(\rho)\big[&\cL\rho-\<(X^2\rho)\nu,\nu\>+nH_\S \<X\rho,\nu\>\\
  &+\a\nu_{n+1}^2\<\de\rho,\de(\log|\x|)\>+\a\de_{n+1}\rho\de_{n+1}(\log|\x|)\big]
 \end{align}
 The proof of \eqref{tollo} with $q_\S$ as in \eqref{kiss} 
  is then concluded by adding and subtracting the quantity  $\frac{n+\a-1}{\rho}|\de\rho|^2$ within squared brackets and using
  \eqref{follo}.

  In the next step, we check that $q_\S$ in \eqref{kiss} is as in \eqref{eq:exactq}.
 We start by observing that an elementary  computation gives  
 \begin{equation}
  \nu_{n+1}^2\<\de\rho,\de\log|x|\>+\de_{n+1}\rho\de_{n+1}\log|x|=\frac{|\x|^{2\a}}{\rho^{2\a+1}}\Big(\nu_{n+1}^2-(\a+1)\nu_{n+1}|\x|^{-2-\a}\<\x,\bar\nu\>\y\Big),
 \end{equation}
 and
 \begin{equation}
  -\frac{n+\a-1}{\rho}|\de\rho|^2+(n+\a)\frac{|X\rho|^2}{\rho}=\frac{|X\rho|^2}{\rho}+\frac{n+\a-1}{\rho}\<X\rho,\nu\>^2.
 \end{equation}
 Inserting formulas \eqref{D2}--\eqref{X^2}  into   \eqref{kiss}, we obtain
 \begin{equation}
 \label{kosso}
  \begin{split}
   q_\S =&\frac{n+\a-1}{\rho}\<X\rho,\nu\>^2 + nH_\S \<X\rho,\nu\>
  \\
  &-\frac{|X\rho|^2}{\rho}\Big[2\a\frac{\<\x,\bar\nu\>^2}{|\x|^2}-(2\a+1)\frac{|\x|^{2\a}}{\rho^{2\a+2}}\<\x,\bar\nu\>^2+2\a(\a+1)\<\x,\bar\nu\>\y|\x|^{-2-\a}\nu_{n+1}
  \\
  &\qquad\qquad\;\; -2(\a+1)(2\a+1)\frac{|\x|^{\a}}{\rho^{2\a+2}}\<\x,\bar\nu\>\y\nu_{n+1}
  \\
  &\qquad\qquad\;\; -(\a+1)^2(2\a+1)\y^2\nu_{n+1}^2\frac{1}{\rho^{2\a+2}}\Big].
  \end{split}
 \end{equation}
 Now we observe that
 \begin{equation}
  \<X\rho,\nu\>^2=\frac{|\x|^{2\a}}{\rho^{4\a+2}}\left(|\x|^\a\<\x,\bar\nu\>+(\a+1)\y\nu_{n+1}\right)^2
 \end{equation}
 and so, since $|X\rho|=(|\x|/\rho)^\a$,
 \begin{equation}\label{zorro}
  \<X\rho,\nu\>=\frac{|X\rho|}{\rho^{\a+1}}\left(|\x|^\a\<\x,\bar\nu\>+(\a+1)\y\nu_{n+1}\right).
 \end{equation}
 Replacing last identity into \eqref{kosso} and after some computations that are omitted, we obtain \eqref{eq:exactq}.
\end{proof}

\section{Mean value formulas}
\label{S4}

We are ready to prove Theorems \ref{MVF} and \ref{thm:SMVF}.
Let $\S\subset\R^{n+1}$ be a hypersurface of class $C^2$ with $0\in\S$. We say that:
\begin{itemize}
\item[i)] $\S$ is $\a$-harmonic at $0$ if $q_\S=0$;

\item[ii)] $\S$ is $\a$-subharmonic at $0$ if $q_\S\geq 0$;
\item[iii)] $\S$ is $\a$-superharmonic at $0$ if $q_\S\leq 0$.  
\end{itemize}

%
%
%
%

\begin{proof}[Proof of Theorem \ref{MVF}] For any $\psi \in C^\infty_c(\S)$, by the integration by parts formula \eqref{IP} we have 
\begin{equation}\label{plot}
0 = \int _\S \cL_\S f\, \psi\,  d\sigma = -\int _\S \langle \delta f \,\delta  \psi  \rangle d\sigma =  \int _\S f \,  \cL_\S\psi\,  d\sigma .
\end{equation}
We shall use this identity for functions $\psi$ with radial structure around $0$.
Let  $\chi\in C^\infty(\R^+)$ be a function such that $\chi(r)=1$  for $0<r<1/2$ and $\chi(r)=0$ for $r>1$.
We may also assume that $\chi'\leq 0$.
With the notation $m(r) = r^{n+\a}$, for $0<s<r$ we define the function $\vartheta_s\in C^\infty(\R^+)$
  \begin{equation}
   \label{eq:chir}
   \vartheta_s(\rho)=\frac{\partial }{\partial  s}\left(\frac{1}{m(s)}\chi\Big(\frac{m(\rho)}{m(s)}\Big)\right),\quad \rho>0.
  \end{equation}
  Assume there exists a solution $\phi_s\in C^\infty(\R^+) $ to the differential problem
   \begin{equation}
   \label{eq:difftosolve2}
\left\{
\begin{array}{l}
   \phi_s''(\rho)+\frac{m''(\rho)}{m'(\rho)} \phi_s'(\rho)=\vartheta_s(\rho),\quad \rho>0\\
   \phi_s(\rho)=0,\quad \rho>s .
   \end{array}
   \right.
  \end{equation}
  Then we may consider the function $\psi_s(\xi) = \phi_s(\rho(\xi))$ for $\xi \in \S$.
  By formula \eqref{tollo} with $q_\S=0$ and \eqref{eq:difftosolve2} we have
  \[
  \cL_\S\psi_s=\Big\{ \phi_s''(\rho) +\phi_s'(\rho)\frac{n+\a-1}{\rho}\Big\} |\de\rho|^2=\frac{\partial }{\partial  s}\left(\frac{1}{m(s)}\chi\Big(\frac{m(\rho)}{m(s)}\Big)
\right)  |\de\rho|^2 ,
  \]
  and from \eqref{plot}
  we deduce that
  \[
  0= \int _\S f \,  \cL_\S\psi_s\,  d\sigma = \frac{\partial }{\partial  s} \int _\S f (\xi) \,  \frac{1}{m(s)}\chi\Big(\frac{m(\rho)}{m(s)}\Big)
   |\de\rho|^2\,  d\sigma ,
  \]
  and thus  for any $0<s<r$
  \[
   \frac{1}{m(s)}
  \int _\S f (\xi) \, \chi\Big(\frac{m(\rho)}{m(s)}\Big)
   |\de\rho|^2\,  d\sigma =
    \frac{1}{m(r)}
   \int _\S f (\xi) \, \chi\Big(\frac{m(\rho)}{m(r)}\Big)
   |\de\rho|^2\,  d\sigma .
  \]
  We may approximate the characteristic function of the interval $(0,1)\subset \R$ by a sequence of functions $\chi$ as above.
  Passing to the limit in the previous identity, we get
    \[
   \frac{1}{m(s)}
  \int _{B_s\cap \S} f (\xi) \,  
   |\de\rho|^2\,  d\sigma =
    \frac{1}{m(r)}
   \int _{B_r\cap \S}  f (\xi) \,     |\de\rho|^2\,  d\sigma .
  \]
  This formula holds for $f=1$, proving that the right hand-side of \eqref{mvfC} does not depend on $r>0$.
  By continuity of $f$ at $0$, we get \eqref{mvf} with $C_{\S,n,\a}$ as in \eqref{mvfC}.
  
  We are left to show that problem \eqref{eq:difftosolve2} has a solution.
  A straightforward computation shows that the function $\vartheta_s$ in \eqref{eq:chir} reads
  \begin{equation}
  \label{eq:chir2}
  \begin{split}
   \vartheta_s(\rho)&
   =-\frac{m'(s)}{m(s)^2}
   \frac{\partial}{\partial \rho}\left(m(\rho)\chi\Big(\frac{m(\rho)}{m(s)}\Big)\right),
   \end{split}
  \end{equation}
  and so the differential equation in \eqref{eq:difftosolve2} is equivalent to
 \begin{equation}
 \label{eq:difftosolve3}
 \frac{\partial }{\partial \rho}\big(m'(\rho)\phi_s'(\rho)\big)=-\frac{m'(s)}{m(s)^2}\frac{\partial }{\partial \rho}\left(m(\rho)\chi\Big(\frac{m(\rho)}{m(s)}\Big)\right).
\end{equation}
Integrating with $\phi'_s(\rho)=0$ for $\rho>s$ we obtain:
 \begin{equation}
 \label{eq:difftosolve4}
\phi_s'(\rho) =-\frac{m'(s)m(\rho)}{m(s)^2 m'(\rho)} \chi\Big(\frac{m(\rho)}{m(s)}\Big) =-\frac{\rho}{s^{n+\a+1} } \chi\Big(\frac{\rho^{n+\a} }{s^{n+\a}}\Big) .
\end{equation}
A final integration with $\phi_s(\rho)=0$ for $\rho>s$ yields
\begin{equation}\label{stop}
\phi_s(\rho) = \int_\rho ^\infty \frac{r}{s^{n+\a+1} } \chi\Big(\frac{r^{n+\a} }{s^{n+\a}}\Big)dr,
\end{equation}
showing that we find a function satisfying as a matter of fact $\phi_s(\rho)=0$ for $\rho>s$.

\end{proof}

\begin{rmk} 
Using the technique of Theorem~\ref{MVF}, with $m(r)= r^{n+\a}$ replaced by $m(r)=r^{n+\a+1}$, one obtains a mean value formula for $\cL$-harmonic functions at $0\in\R^{n+1}$, where $|\delta\rho|$ in \eqref{MVF} is replaced by $|X\rho|$, and $\cL$ is the Grushin Laplacian \eqref{eq:cL}.
The same technique works when $\cL=\sum_{i=1}^m X_j^2$ is the sub-Laplacian of any family 
$X_1,\dots, X_m$ of smooth vector fields in $\R^{n+1}$ satisfying the H\"ormander condition,  with $2\leq m\leq n+1$ { and admitting a global fundamental solution}. The resulting mean-value formulas coincide with the formulas  obtained in~\cite{CL21}.

We explain the relation between the two approaches in the case of a Carnot group {  of topological dimension $d>2$}.  Let $\Gamma$ be the fundamental solution of the corresponding Carnot sub-Laplacian  $\cL$ with pole at $0$. For a harmonic function $\cL f=0$, the mean value formula (1.4) proved in~\cite{CL21}
reads
\begin{equation}
\label{q:CL}
f(0)=\frac{1}{r}\int_{\{\xi\in\R^d : \Gamma(\xi)>\frac{1}{r}\}}f(\xi)\left| X (\log \Gamma)\right|^2\varphi\left(\frac{1}{r\Gamma(\xi) }\right)\;d\xi,\quad r>0,
\end{equation}
where $\varphi$ is any continuous function on the interval $[0,1]$ with unit integral.

Let
{ $\varrho(\xi) = \Gamma(\xi)^{\frac{1}{Q-2}}$}, $\xi\neq0$, where $Q\in\N$ is the homogeneous dimension of the group. The Lebesgue measure of the  balls $B_s =\{\xi\in\R^{d}:\varrho(\xi)<s\}$ satisfies $ m(s) =\mathcal L^{d}(B_s) = C s^ Q$ for some constant $C>0$ depending on $n$ and $Q$.
Using the technique of  Theorem~\ref{MVF} we get the mean value formula
\begin{equation}\label{eq:mvf_horm}
f(0) =\frac{C_{d,Q} }{m(s)} \int_{B_s}f(\xi) \, |X\rho(\xi)|^2  d\xi,
\end{equation}
with $C_{d,Q}>0$ fixed on choosing $f=1$. Formula \eqref{eq:mvf_horm}
is precisely formula \eqref{q:CL} with  
 the choice $\varphi(t)=\frac{Q}{Q-2} t^{\frac{2}{Q-2}}$. In fact, in this case we have
\[
\left| X (\log \Gamma)\right|^2\varphi\left(\frac{1}{r\Gamma}\right)
=Q(Q-2) r^{\frac{2}{2-Q}} |X \rho|^2,
\] 
and we obtain equivalence with \eqref{eq:mvf_horm} by setting $r=s^{Q-2}$.

\end{rmk}

\begin{proof}[Proof of Theorem  \ref{thm:SMVF}]
The proof is identical to the proof of Theorem \ref{MVF} with minor modifications that we list below.
For any nonnegative $\psi \in C^\infty_c(\S)$,   we have 
\begin{equation}\label{plot2}
0 \leq     \int _\S f \,  \cL_\S\psi\,  d\sigma .
\end{equation}
The function $\phi_s$ is the solution to \eqref{eq:difftosolve2} defined in \eqref{stop}. Notice that $\phi_s'\leq 0$ if $\chi\geq 0$ in \eqref{eq:difftosolve4}.
Then we have $q_\S (\xi)\phi_s' (\rho(\xi))\leq 0$ for  $\xi\in \S^*$. By formula \eqref{tollo}, the function $\psi_s = \phi_s\circ\rho$ thus satisfies
\[
\cL_\S\psi_s\leq \frac{\partial }{\partial  s}\left(\frac{1}{m(s)}\chi\Big(\frac{m(\rho)}{m(s)}\Big)
\right)  |\de\rho|^2,
\] 
and we get, for $0<s<r$,
\[
   \frac{1}{m(s)}
  \int _{B_s\cap \S} f (\xi) \,  
   |\de\rho|^2\,  d\sigma \leq 
    \frac{1}{m(r)}
   \int _{B_r\cap \S}  f (\xi) \,     |\de\rho|^2\,  d\sigma .
\]
The choice $f=1$ shows the existence of the limit in \eqref{mvfC2}.

\end{proof}

\begin{rmk} If  $\S $ is $\a$-superharmonic, $q_\S\leq 0$, then a function $f$ with $   \cL_\Sigma f\leq 0$
  satisfies the super-mean-value formula at $0$:
 \begin{equation}
  \label{mvf-2}
  f(0) \geq \frac{C_{\S,n,\a} }{r^{n+\a} } \int_{B_r\cap \S }f(\xi) \, |\delta\rho(\xi)|^2  d\sigma.
 \end{equation}
The proof is the same as in the sub-harmonic case.

\end{rmk}

\section{Analysis of the structural function $q_\S$}
\label{S5}

\subsection{Homogeneous hypersurfaces are harmonic}\label{ss:homo}

In $\R^{n+1}$ with the Grushin structure, we  introduce the anisotropic dilations $\dil_\lambda:\R^{n+1}\to\R^{n+1}$, $\lambda>0$,
\[
\dil_\lambda (\xi) =\dil_\lambda (\x,\y) = (\lambda \x,\lambda^{\alpha+1} \y),\quad \xi\in\R^{n+1}.
\]
We say that a set $\S\subset\R^{n+1}$ is $\dil_\lambda$-homogeneous if $\dil_\lambda(\S)=\S$ for any $\lambda>0$.

\begin{lemma} 
Let $\S\subset\R^{n+1}$ be a hypersurface of class $C^2$ with $0\in\S$.
If $\S$ is $\dil_\lambda$-homogeneous then $\S$ is $\a$-harmonic at $0$.
\end{lemma}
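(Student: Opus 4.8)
The plan is to reduce the whole statement to the single geometric identity $\langle X\rho,\nu\rangle=0$ along $\S$. Indeed, by the explicit expression \eqref{eq:exactq} the structural function factors as $q_\S=\langle X\rho,\nu\rangle\cdot[\,\cdots]$, where the bracketed factor is finite on $\S^*$; hence $\langle X\rho,\nu\rangle=0$ forces $q_\S\equiv 0$ there, which is exactly $\a$-harmonicity at $0$. So the entire content of the lemma is that homogeneity of $\S$ makes the $X$-gradient of the gauge $\rho$ tangential along the surface.

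First I would identify the infinitesimal generator of the dilation group. Differentiating $\dil_\lambda(\xi)=(\lambda\x,\lambda^{\a+1}\y)$ at $\lambda=1$ gives the Euclidean vector field $Z(\xi)=(\x,(\a+1)\y)$. Since $\S$ is $\dil_\lambda$-invariant, for each $\xi\in\S$ the curve $\lambda\mapsto\dil_\lambda(\xi)$ stays in $\S$, so its velocity $Z(\xi)$ is Euclidean-tangent to $\S$; equivalently $\langle Z,N\rangle=0$, where $N=(\bar N,N')$ is the Euclidean unit normal. Spelled out, homogeneity is precisely the scalar relation $\langle \x,\bar N\rangle+(\a+1)\y N'=0$ at every point of $\S$.

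Next I would compute $\langle X\rho,\nu\rangle$ directly from the known first-order derivatives $X_i\rho=x_i|\x|^{2\a}\rho^{-(2\a+1)}$ for $i\le n$ and $X_{n+1}\rho=(\a+1)|\x|^{\a}\y\,\rho^{-(2\a+1)}$, together with the coordinates of the $\a$-normal in \eqref{eq:nu}. Substituting and factoring out the common positive factor, one finds
\begin{equation*}
\langle X\rho,\nu\rangle=\frac{|\x|^{2\a}\rho^{-(2\a+1)}}{\sqrt{|\bar N|^2+|\x|^{2\a}|N'|^2}}\Big(\langle \x,\bar N\rangle+(\a+1)\y N'\Big),
\end{equation*}
so the surviving bracket is exactly the combination $\langle Z,N\rangle$ that vanishes by the previous step. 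Therefore $\langle X\rho,\nu\rangle=0$ on $\S^*$ and $q_\S\equiv 0$, as claimed.

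The computation is routine; the only genuine step is recognizing that the Grushin pairing $\langle X\rho,\nu\rangle$, after clearing the common factor, collapses precisely to the Euclidean pairing of the dilation generator with the normal. I expect no serious obstacle, though I would first record the degree-one homogeneity $\rho(\dil_\lambda\xi)=\lambda\rho(\xi)$ so that the geometric picture — $X\rho$ normal to the $\dil_\lambda$-invariant gauge spheres — is transparent. In the graph setting $\S=\{\y=u(\x)\}$ used throughout the paper the same identity also follows from Euler's relation $\langle \x,\nabla u\rangle=(\a+1)u$ for the degree-$(\a+1)$-homogeneous function $u$, giving an independent check.
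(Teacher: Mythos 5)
Your proposal is correct, and it reaches the paper's key identity $\langle X\rho,\nu\rangle=0$ by a slightly different and somewhat more general route than the paper's own proof. The paper restricts at the outset to the case where $\Sigma$ is a $y$-graph $\{y=u(x)\}$, encodes $\dil_\lambda$-homogeneity as Euler's relation $\langle\nabla u(x),x\rangle=(\alpha+1)u(x)$, rewrites this via the graph formulas \eqref{eq:nu_graph} as $|x|^\alpha\langle x,\bar\nu\rangle+(\alpha+1)y\,\nu_{n+1}=0$ on $\Sigma^*$, and then invokes the identity \eqref{zorro} to conclude $\langle X\rho,\nu\rangle=0$ and hence $q_\Sigma=0$ from \eqref{eq:exactq}. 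You instead work with an arbitrary $C^2$ hypersurface: differentiating the invariance $\dil_\lambda(\Sigma)=\Sigma$ along the orbit $\lambda\mapsto\dil_\lambda(\xi)$ at $\lambda=1$ shows that the generator $Z(\xi)=(x,(\alpha+1)y)$ is Euclidean-tangent to $\Sigma$, i.e.\ $\langle x,\bar N\rangle+(\alpha+1)y N'=0$, and you then verify by a direct computation from \eqref{eq:nu} and the first derivatives of $\rho$ that
\begin{equation}
\langle X\rho,\nu\rangle=\frac{|x|^{2\alpha}\rho^{-(2\alpha+1)}}{\sqrt{|\bar N|^2+|x|^{2\alpha}|N'|^2}}\Big(\langle x,\bar N\rangle+(\alpha+1)y N'\Big),
\end{equation}
which is indeed correct, so the Grushin pairing collapses to a positive multiple of the Euclidean pairing $\langle Z,N\rangle$. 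The two differentiations are, as you note yourself, the same fact in different coordinates (Euler's relation for $u$ is exactly the tangency of $Z$ written in the graph parametrization). What your formulation buys is that it needs neither the reduction to graphs, which the paper performs without justification, nor the auxiliary identity \eqref{zorro}; what the paper's version buys is that it reuses machinery (\eqref{eq:nu_graph}, \eqref{zorro}) already established for the main theorems. Both arguments finish identically, reading off $q_\Sigma=0$ from the factorization \eqref{eq:exactq}, whose bracketed factor is finite on $\Sigma^*$, and both correctly treat harmonicity as a statement on $\Sigma^*$ where $\nu$ and $H_\Sigma$ are defined.
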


\begin{proof} We check the claim when $\S$ is a $\y$-graph
$
  \S =\{(\x,u(\x))\in\R^{n+1}: x\in\R^n\}$ for some function  $u\in\ C^2(\R^n)$ satisfying  the identity $u(\lambda \x) =\lambda^{\a+1} u(\x)$ for any $\lambda>0$ and $\x\in\R^n$.
Differentiating this identity at $\lambda =1$ we get
\begin{equation}\label{AA}
 \langle \nabla u(\x),\x\rangle = (\alpha+1) u(\x),\quad \x\in \R^n.
 \end{equation}
 Using formulas \eqref{eq:nu_graph} for the $\a$-normal $\nu =(\bar\nu,\nu_{n+1})$, \eqref{AA} is equivalent to
 \[
 |\x|^\a  \langle \x,\bar\nu\rangle + (\alpha+1) \y \nu_{n+1}=0,\quad (\x,\y)\in\S^*. 
 \]
 By formula \eqref{zorro} this is in turn equivalent to $\langle X\rho,\nu\rangle =0$ on $\S^*$, and this implies   $q_\S=0$, see \eqref{eq:exactq}.
\end{proof}
 
When $\S$ is $\dil_\lambda$-homogeneous we have $\langle X\rho,\nu\rangle=0$ and, using \eqref{square},  the kernel $|\delta\rho|^2$ appearing in the mean value formula \eqref{mvfC} reduces to
\[
|\delta\rho|^2= |X\rho|^2 =\frac{|\x|^{2\a}}{\rho^{2\a}}.
\]
This kernel is $0$-homogeneous with respect to the dilations $\dil_\lambda$ and satisfies $|\delta\rho|^2\leq 1$.

\subsection{Flat case}
The hyperplane $\S= \{(x,y)\in\R^{n+1}: y=0\}$ is $\dil_\lambda$ homogeneous and it is therefore $\a$-harmonic. 
The $\a$-normal is constant, $\nu =(0,1)\in\R^n\times\R$, and it follows that $\S$ is  also $\a$-minimal, $H_\S=0$.

The tangential gradient $\delta$
reduces to the standard gradient $\delta\varphi = (\nabla_x\varphi,0)$ and the non-adjoint Laplacian $\Delta_\S$ in \eqref{eq:Delta_Sigma} reduces to the standard Laplacian $\Delta_x$ in $\R^n$, see \eqref{fuf}. From formula \eqref{fif}
we deduce that
\[
\cL_\S \varphi =  \Delta_x\varphi +\a  \langle \nabla_x\log |x|,\nabla_x\varphi\rangle = \frac{1}{|\x|^\a} \mathrm{div}\Big( |\x|^\a \nabla_x \varphi \Big).
\]
Theorem \ref{MVF} states that a function $\varphi \in C^2(\R^n)$ satisfying $\cL_\S \varphi =0$ has the mean value property at $0$
\[
  \varphi (0) = \frac{n+\a }{n\omega_n r^{n+\a}}\int_{\{|\x|<r\} } \varphi (x) |\x|^\a  dx,
\]
with $\omega_n = \mathcal L^n(\{|x|<1\})$.

\subsection{$\a$-subharmonic surfaces}
In this section, we look for sufficient conditions for a hypersurface $\S$ to be $\a$-subharmonic at $0\in\S$.

\begin{definition} [$\eta$-flatness] Let $\eta>0$. 
We say that a hypersurface $\S\subset\R^{n+1}$ is $\eta$-flat  at $0\in\S$ if there exists $r>0$ such that its $\a$-normal $\nu =(\bar\nu,\nu_{n+1})$ satisfies
\begin{equation} \label{eta-flat}
(\alpha+1)  |\y| |\nu_{n+1}| \leq \eta |\x|^\a  |\langle  x,\bar\nu \rangle |
\end{equation}
for all points $\xi=(\x,\y) \in \S^*\cap B_r$.
\end{definition}

When $\S$ is the $\y$-graph of a function $u$, condition \eqref{eta-flat} reads
 \begin{equation}
  \label{eq:stella}
  (\a+1)|u|\leq \eta\big|\< \x,\nabla_\x u \>\big|
 \end{equation}
holding for points a  neighborhood of $0\in\R^n$.

\begin{lemma}
 \label{lem:stellato2stella} Let $u\in C^1( \{|\x|\leq 1\})$ be a function satisfying \eqref{eq:stella}.
 Then  for any point  $|\x|\leq 1$ we have
 \begin{equation}
  \label{eq:2stelle}
  |u(\x)|\leq\Big(\max_{|\x|=1}|u|\Big)|\x|^{\frac{\a+1}{\eta}}.
 \end{equation}

\end{lemma}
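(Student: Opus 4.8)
The plan is to reduce the statement to a one-dimensional differential inequality along rays from the origin and then integrate it. First I would fix a direction $\omega\in\mathbb{S}^{n-1}$ and set $g(t)=u(t\omega)$ for $t\in[0,1]$; since $u\in C^1(\{|\x|\le 1\})$, the function $g$ is continuous on $[0,1]$ and $C^1$ on $(0,1]$. The chain rule gives $g'(t)=\langle\nabla u(t\omega),\omega\rangle$, hence $\langle\x,\nabla u\rangle=t\,g'(t)$ at $\x=t\omega$, so hypothesis \eqref{eq:stella} becomes the pointwise inequality $|g'(t)|\ge \frac{\beta}{t}\,|g(t)|$ on $(0,1]$, where $\beta=(\a+1)/\eta$. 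Writing $M=\max_{|\x|=1}|u|$, the target bound \eqref{eq:2stelle} is exactly $|g(t)|\le M\,t^{\beta}$ for every $\omega$ and every $t\in(0,1]$.

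If $g(t_0)=0$ the bound is trivial at $t_0$, so I may work on a maximal open subinterval $I\subset(0,1]$ on which $g\neq 0$. There the logarithmic derivative $g'/g$ is continuous and, by the inequality above, satisfies $|g'/g|\ge \beta/t>0$; in particular it never vanishes, so it has constant sign on $I$ and $\log|g|$ is strictly monotone on $I$. Accordingly I would split into two cases according to whether $\log|g|$ is increasing or decreasing on $I$, and analyze the auxiliary function $\Phi(t)=\log|g(t)|-\beta\log t$, whose derivative equals $g'/g-\beta/t$.

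The crux is to exclude the decreasing case. If $\log|g|$ were decreasing on $I$, then $I$ would have to extend down to $0$: its left endpoint $a$ cannot be an interior zero of $g$, since $|g|$ would be bounded below near $a^+$ there, contradicting the continuity forcing $g(a)=0$. Integrating $g'/g\le-\beta/t$ from a small $s$ up to a fixed $t\in I$ then forces $|g(s)|\to\infty$ as $s\to0^+$, contradicting the boundedness of the continuous function $g$ on $[0,1]$ (note that \eqref{eq:stella} at $\x=0$ forces $u(0)=0$). Hence $\log|g|$ is increasing on $I$; the same continuity argument applied at the right endpoint shows that $I$ must reach $t=1$, i.e. $I=(a,1]$. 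On $I$ the increasing case gives $g'/g\ge\beta/t$, so $\Phi$ is nondecreasing and $\Phi(t)\le\Phi(1)=\log|g(1)|\le\log M$. Exponentiating yields $|g(t)|\le M\,t^{\beta}$, which is the claim. The main obstacle is precisely this sign analysis: the hypothesis controls $|g'|$ only from below, so by itself it does not bound $|g|$; it is the combination with the boundedness (equivalently, the regularity) of $u$ at the origin that selects the correct monotonicity and converts the lower bound on the derivative into the desired upper bound on $u$.
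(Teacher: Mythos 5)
Your proof is correct and follows essentially the same route as the paper: restrict $u$ to rays through the origin, analyze the connected components of the set where the restricted function is nonzero, exploit that the hypothesis forces the log-derivative to have constant sign there, and integrate the resulting logarithmic differential inequality to get the power bound. The only cosmetic difference is how the ``decreasing'' case is ruled out: the paper observes that each component's left endpoint is a zero of the restricted function (note \eqref{eq:stella} forces $u(0)=0$), so strict monotonicity immediately yields that $|u|$ increases along the component, whereas you integrate the inequality toward $0$ and derive a blow-up contradiction with boundedness --- both arguments are elementary and equivalent in substance.
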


\begin{proof} Let $|\x|=1$ be fixed. We prove the claim along the segment $t\x$, with $t\in[0,1]$.  
Letting $\phi:[0,1] \to\R$, $\phi(t)=u(t\x)$, assumption \eqref{eq:stella} reads
 \begin{equation}
  \label{eq:stella'}
  (\a+1)|\phi(t)|\leq\eta t|\phi'(t)|.
 \end{equation}
 If $\phi=0$ on  $[0,1]$, the claim is trivial. Then we can assume that the open set  $A=\{t\in(0,1):\phi(t)\neq 0\}$ is nonempty.
 This set is a finite or countable disjoint union of intervals $(a,b)\subset A$. We always have  $\phi(a)=0$ and from
  \eqref{eq:stella'} it follows that $\phi'\neq 0$ on $(a,b)$, say $\phi'(t)>0$ for any $t\in(a,b)$.
  Then $\phi$ is strictly monotone increasing and thus $\phi(b)>0$, and so $b=1$. It follows that  $A=(a,1)$, for some $a\in [0,1)$, and $\phi=0$ on $[0,a]$.

 We can without loss of generality assume that $a=0$ and conclude the proof in the following way. We have $\phi>0$ and, say, $\phi'>0$ on  $(0,1)$. Then \eqref{eq:stella'} reads
 \begin{equation}
  \frac{d}{dt}\log(t^{\a+1})\leq \eta\frac{d}{dt}\log\phi(t),
 \end{equation}
 and integrating this inequality  from $t=s$ to $t=1$,   $0<s<1$, we get
 $  \phi(s)\leq\phi(1)s^{\frac{\a+1}{\eta}}$. This implies  \eqref{eq:2stelle} and the proof is concluded. 
  \end{proof}

\begin{theorem}
 \label{thm:q>0}
 Let $\S\subset\R^{n+1}$ by a hypersurface of class $C^2$ with $\a$-normal $\nu = (\bar \nu,\nu_{n+1})$ and $\a$-mean curvature $H_\S$.
 Assume that:
 \begin{itemize}
 \item[i)] $\S$ is $\eta$-flat at $0\in\S$ for some
 \begin{equation}\label{ET}
  0<\eta<\frac{n+\a}{n+3\a}. 
 \end{equation}
 
 \item[ii)] We have, with limit restricted to $\xi=(\x,\y)\in\S$,
 \begin{equation}\label{spec}
 \lim_{\xi\to0} \frac{|\x|^2 H_\S (\xi )}{ \langle \bar \nu(\xi) , x\rangle }=0.
 \end{equation}
 
 \end{itemize}

 Then there exists a $\delta>0$ such that $q_\S\geq 0$ on $\S^*\cap B_\delta$.

\end{theorem}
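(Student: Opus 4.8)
The plan is to reduce the claim to a pointwise sign analysis of a single scalar bracket. Starting from the explicit form \eqref{eq:exactq} of $q_\S$ and recalling that $\langle\nabla_\x\log|\x|,\bar\nu\rangle=\langle\x,\bar\nu\rangle/|\x|^2$, I would substitute identity \eqref{zorro} together with $|X\rho|=|\x|^\a\rho^{-\a}$ from \eqref{square}. Writing $P=|\x|^\a\langle\x,\bar\nu\rangle+(\a+1)\y\nu_{n+1}$ for the factor in \eqref{zorro}, both $\langle X\rho,\nu\rangle$ and $\langle X\log\rho,\nu\rangle=\rho^{-1}\langle X\rho,\nu\rangle$ become explicit multiples of $P$. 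Factoring $\langle\x,\bar\nu\rangle/|\x|^2$ out of the bracket, I expect to reach an identity of the shape
\[
q_\S=\frac{|\x|^{2\a-2}\langle\x,\bar\nu\rangle^2}{\rho^{2\a+1}}\,(1+\theta)\Big[(n+3\a)(1+\theta)\frac{|\x|^{2\a+2}}{\rho^{2\a+2}}-2\a+\frac{nH_\S\,|\x|^2}{\langle\x,\bar\nu\rangle}\Big],
\]
where $\theta=(\a+1)\y\nu_{n+1}/(|\x|^\a\langle\x,\bar\nu\rangle)$ encodes the non-homogeneous part of $P$. The prefactor is manifestly nonnegative as soon as $1+\theta>0$, so the sign of $q_\S$ is governed entirely by the bracket.

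Next I would bring in the two hypotheses. If $\langle\x,\bar\nu\rangle=0$ at a point, then $\eta$-flatness \eqref{eta-flat} forces $\y\nu_{n+1}=0$, hence $P=0$ and $q_\S=0$, so such points are harmless. Otherwise \eqref{eta-flat} gives $|\theta|\le\eta$, and since \eqref{ET} entails $\eta<\tfrac{n+\a}{n+3\a}<1$, we have $1+\theta\ge 1-\eta>0$, making the prefactor nonnegative. It then remains to prove the bracket positive near $0$. Since $\S$ is a $\y$-graph with $\y=u(\x)$, Lemma \ref{lem:stellato2stella} yields $|u(\x)|\le C|\x|^{(\a+1)/\eta}$ with exponent $(\a+1)/\eta>\a+1$; inserting this into $\rho^{2\a+2}=|\x|^{2\a+2}+(\a+1)^2\y^2$ from \eqref{def:Gauge} gives $\tfrac{|\x|^{2\a+2}}{\rho^{2\a+2}}=\big(1+O(|\x|^{2(\a+1)(1/\eta-1)})\big)^{-1}\to1$ uniformly as $\x\to0$. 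The curvature term $\tfrac{nH_\S|\x|^2}{\langle\x,\bar\nu\rangle}\to0$ is exactly hypothesis \eqref{spec}.

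Combining these estimates, the bracket is bounded below by $(n+3\a)(1-\eta)\tfrac{|\x|^{2\a+2}}{\rho^{2\a+2}}-2\a+o(1)$, and because \eqref{ET} is equivalent to $(n+3\a)(1-\eta)>2\a$, its limit inferior as $\xi\to0$ is at least $(n+3\a)(1-\eta)-2\a>0$. Hence there is $\delta>0$ with the bracket strictly positive on $\S^*\cap B_\delta$, giving $q_\S\ge0$ there. The main obstacle I foresee is the uniform control of the ratio $|\x|^{2\a+2}/\rho^{2\a+2}$: the argument hinges on $\y$ vanishing strictly faster than $|\x|^{\a+1}$, which is precisely why one needs $\eta<1$ so that Lemma \ref{lem:stellato2stella} produces the super-critical decay exponent $(\a+1)/\eta$, while the sharper threshold in \eqref{ET} is what finally dominates the constant $2\a$ arising from the $\langle\nabla_\x\log|\x|,\bar\nu\rangle$ term. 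A minor technical point to verify is that all the $o(1)$ and $O(\cdot)$ errors are uniform over directions at fixed small radius, which follows since the bound from Lemma \ref{lem:stellato2stella} and the limit \eqref{spec} depend only on $\xi\to0$ in $\S$.
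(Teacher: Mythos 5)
Your proof is correct and follows essentially the same route as the paper's: the same use of \eqref{zorro} and \eqref{square} to expose the factor $|\x|^\a\<\x,\bar\nu\>+(\a+1)\y\nu_{n+1}$, the $\eta$-flatness bound yielding the factor $1-\eta$, Lemma \ref{lem:stellato2stella} to force $|\x|^{2\a+2}/\rho^{2\a+2}\to1$, hypothesis \eqref{spec} to kill the curvature term, and the threshold \eqref{ET} rewritten as $(n+3\a)(1-\eta)>2\a$. Your factorized expression for $q_\S$ merely reorganizes the paper's chain of implications \eqref{BIG}--\eqref{BIG4}, with the minor bonus that points where $\<\x,\bar\nu\>=0$ (at which the paper's division is formally undefined) are treated explicitly.
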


\begin{proof}  Without loss of generality we may assume that $\S = \{(x,u(x))\in\R^{n+1}:|\x|\leq 1\}$ for some function $u\in C^2(\{|\x|\leq 1\})$,
and that \eqref{eta-flat}
 holds with $r=1$. Letting $C_1=\max_{|\x|=1}|u(x)|$, points $(x,y)\in\S$ satisfy by \eqref{eq:2stelle}
\begin{equation}
\label{cillo}
|y| \leq C_1 |x| ^{\frac{\a+1}{\eta}}.
\end{equation}

Formula \eqref{eq:exactq} for $q_\S$ reads
\[
  q_\S(\xi) =\<X\rho,\nu\>\Big[\frac{n+3\a}{\varrho} \<X\rho,\nu\> -\frac{2\a}{|\x|^2} \<\x ,\bar\nu\> +nH_\S\Big],
  \]
and the inequality $q_\S\geq 0$ is thus implied   by
\begin{equation}\label{BIG}
\Big| \frac{2\a}{|\x|^2} \<\x ,\bar\nu\> -nH_\S\Big| \leq \frac{n+3\a}{\varrho} |\<X\rho,\nu\> |.
\end{equation}
Inserting the identity \eqref{zorro} into \eqref{BIG}  and then using \eqref{square}, 
we see that $q_\S\geq0$ is implied by
\begin{equation}\label{BIG2}
\Big| \frac{2\a}{|\x|^2} \<\x ,\bar\nu\> -nH_\S\Big|  
\leq 
  \frac{n+3\a}{\rho^{2\a+2}}|\x|^\a\big||\x|^\a\<\x,\bar\nu\>+(\a+1)y\nu_{n+1}\big|.
\end{equation}
By the $\eta$-flatness condition \eqref{eta-flat}, \eqref{BIG2} is in turn implied by
\begin{equation}\label{BIG3}
\Big| \frac{2\a}{|\x|^2} \<\x ,\bar\nu\> -nH_\S\Big|  
\leq  (1-\eta)
  \frac{n+3\a}{\rho^{2\a+2}}|\x|^{2\a} |\<\x,\bar\nu\>|,
\end{equation} 
 and finally, using assumption \eqref{spec}, \eqref{BIG3} is implied by
 \begin{equation}\label{BIG4}
\Big(2\a+\frac{n|\x|^2 |H_\S| }{|\langle x,\bar\nu\rangle| } \Big )
\leq  (1-\eta)(n+3\a )
 \frac{|\x|^{2\a+2} }{\rho^{2\a+2}}.
\end{equation} 
 Now observe that, by \eqref{cillo},
  \begin{equation}
  \label{boh}
 \frac{\rho^{2(\a+1)}}{|\x|^{2(\a+1)} }=1+\frac{(\a+1)^2\y^2}{|\x|^{2(\a+1)}}\leq 1+C_1 (\a+1)^2 |\x|^{2(\a+1)(1/\eta-1)}.
 \end{equation}
 By \eqref{boh} and \eqref{spec},
  inequality \eqref{BIG4} is satisfied for $x=0$ as a strict inequality, by \eqref{ET}.
   Our claim follows by a limiting argument.
 
\end{proof}

\bibliographystyle{abbrv}
\bibliography{biblio}

\begin{thebibliography}{10}

\bibitem{BFI15}
W.~Bauer, K.~Furutani, and C.~Iwasaki.
\newblock Fundamental solution of a higher step {G}rushin type operator.
\newblock {\em Adv. Math.}, 271:188--234, 2015.

\bibitem{BDeGM69}
E.~Bombieri, E.~De~Giorgi, and M.~Miranda.
\newblock Una maggiorazione a priori relativa alle ipersuperfici minimali non
  parametriche.
\newblock {\em Arch. Rational Mech. Anal.}, 32:255--267, 1969.

\bibitem{BL13}
A.~Bonfiglioli and E.~Lanconelli.
\newblock Subharmonic functions in sub-{R}iemannian settings.
\newblock {\em J. Eur. Math. Soc. (JEMS)}, 15(2):387--441, 2013.

\bibitem{CGL93}
G.~Citti, N.~Garofalo, and E.~Lanconelli.
\newblock Harnack's inequality for sum of squares of vector fields plus a
  potential.
\newblock {\em Amer. J. Math.}, 115(3):699--734, 1993.

\bibitem{CL21}
G.~Cupini and E.~Lanconelli.
\newblock On mean value formulas for solutions to second order linear {PDE}s.
\newblock {\em Ann. Sc. Norm. Super. Pisa Cl. Sci. (5)}, 22(2):777--809, 2021.

\bibitem{FM16}
V.~Franceschi and R.~Monti.
\newblock Isoperimetric problem in {$H$}-type groups and {G}rushin spaces.
\newblock {\em Rev. Mat. Iberoam.}, 32(4):1227--1258, 2016.

\bibitem{FL83}
B.~Franchi and E.~Lanconelli.
\newblock Une m\'{e}trique associ\'{e}e \`a une classe d'op\'{e}rateurs
  elliptiques d\'{e}g\'{e}n\'{e}r\'{e}s.
\newblock Number Special Issue, pages 105--114 (1984). 1983.
\newblock Conference on linear partial and pseudodifferential operators
  (Torino, 1982).

\bibitem{G}
N.~Garofalo.
\newblock Unique continuation for a class of elliptic operators which
  degenerate on a manifold of arbitrary codimension.
\newblock {\em J. Differential Equations}, 104(1):117--146, 1993.

\bibitem{G19}
N.~Garofalo.
\newblock Fractional thoughts.
\newblock In {\em New developments in the analysis of nonlocal operators},
  volume 723 of {\em Contemp. Math.}, pages 1--135. Amer. Math. Soc.,
  [Providence], RI, [2019] \copyright 2019.

\bibitem{H67}
L.~H\"{o}rmander.
\newblock Hypoelliptic second order differential equations.
\newblock {\em Acta Math.}, 119:147--171, 1967.

\bibitem{W50}
T.~J. Willmore.
\newblock Mean value theorems in harmonic {R}iemannian spaces.
\newblock {\em J. London Math. Soc.}, 25:54--57, 1950.

\end{thebibliography}

\end{document}